\DeclareMathOperator{\supp}{supp}
\DeclareMathOperator{\spn}{span}
\newtheorem{theorem}{Theorem}[section]
\newtheorem{lemma}[theorem]{Lemma}
\newtheorem{definition}[theorem]{Definition}
\theoremstyle{remark}\newtheorem{remark}[theorem]{Remark}
\numberwithin{equation}{section}
\title[Vertical projections in the Heisenberg group]{Vertical projections in the Heisenberg group for sets of dimension greater than 3}
\author{Terence L.~J.~Harris}
\address{Department of Mathematics, Cornell University, Ithaca, NY 14853, USA}
\email{tlh236@cornell.edu}
\subjclass[2020]{28A78; 28A80}
\keywords{Heisenberg group, Hausdorff dimension, vertical projections}
\begin{document} 
\begin{abstract} It is shown that vertical projections in the Heisenberg group of sets of dimension strictly greater than 3 almost surely have positive area. The proof uses the point-plate incidence method introduced by Fässler and Orponen, and also uses a similar approach to a recent maximal inequality of Zahl for fractal families of tubes. It relies on the endpoint trilinear Kakeya inequality in $\mathbb{R}^3$. Some related results are given on generic intersections with horizontal lines.  \end{abstract}
\maketitle

\section{Introduction} \begin{sloppypar} Let $\mathbb{H}$ be the Heisenberg group, identified as a set with $\mathbb{C} \times \mathbb{R}$, and equipped with the product
\[ (x,y,t) \ast (u,v, \tau) = \left(x+u, y+v, t + \tau + \frac{1}{2}\left( xv - yu \right) \right). \]
For each $\theta \in [0, \pi)$, let
\[ \mathbb{V}_{\theta} = \left\{ \left(\lambda e^{i \theta}, 0\right) \in \mathbb{C} \times \mathbb{R} : \lambda \in \mathbb{R} \right\}, \]
and let $\mathbb{V}_{\theta}^{\perp}$ be the Euclidean orthogonal complement of $\mathbb{V}_{\theta}$. Each $(z, t) \in \mathbb{H}$ can be uniquely decomposed as a product
\[ (z,t)  = P_{\mathbb{V}_{\theta}^{\perp}}(z,t) \ast P_{\mathbb{V}_{\theta}}(z,t), \]
of an element of $\mathbb{V}_{\theta}^{\perp}$ on the left with an element of $\mathbb{V}_{\theta}$ on the right. This defines the vertical projection maps $P_{\mathbb{V}_{\theta}^{\perp}}$. Let $d_{\mathbb{H}}$ be the (left-invariant) Korányi metric on $\mathbb{H}$, given by 
\[ d_{\mathbb{H}}((z,t), (\zeta, \tau)) = \left\lVert (\zeta, \tau)^{-1} \ast (z,t) \right\rVert_{\mathbb{H}}, \]
where 
\[  \left\lVert (z,t) \right\rVert_{\mathbb{H}} = \left( \lvert z \rvert^4 + 16t^2 \right)^{1/4}. \]
The Korányi metric is bi-Lipschitz equivalent to the Carnot-Carathéodory metric on $\mathbb{H}$ \cite[pp.~18--19]{capogna}, and thus induces the same Hausdorff dimension. Let ``$\dim$'' refer to the Hausdorff dimension of a set in $\mathbb{H}$ with respect to the Korányi metric. This work gives a proof of the following theorem.
\begin{theorem} \label{maintheorem} Let $A$ be an analytic subset of $\mathbb{H}$. If $\dim A > 3$ then 
\[ \mathcal{H}^3_{\mathbb{H}}\left(  P_{\mathbb{V}_{\theta}^{\perp}}(A) \right) > 0,  \]
for a.e.~$\theta \in [0, \pi)$.  \end{theorem}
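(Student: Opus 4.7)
Fix $s \in (3, \dim A)$ and use Frostman's lemma to obtain a compactly supported Borel probability measure $\mu$ on $A$ with $\mu(B_{\mathbb{H}}(p,r)) \lesssim r^{s}$ for all $p \in \mathbb{H}$ and $r > 0$. To show $\mathcal{H}^2_E(P_{\mathbb{V}_\theta^\perp}(A)) > 0$ for a.e.\ $\theta$, it suffices to establish a suitable averaged $L^2$ bound for a $\delta$-mollification of the pushforward, schematically
\[
\int_0^{\pi} \int_{\mathbb{V}_\theta^\perp} \bigl| \mu_\delta^\theta \bigr|^2 \, d\mathcal{H}^2_E \, d\theta = O(1)
\]
as $\delta \to 0$, where $\mu_\delta^\theta$ is a $\delta$-smoothing of $(P_{\mathbb{V}_\theta^\perp})_\sharp \mu$ on $\mathbb{V}_\theta^\perp$. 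Such a bound gives absolute continuity with $L^2$ density for a.e.\ $\theta$, hence positive Lebesgue measure of the support.

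\textbf{Reduction to tube incidences.} A direct computation using the group law shows that the fiber $P_{\mathbb{V}_\theta^\perp}^{-1}(w)$ through a base point $(x,y,t) \in \mathbb{H}$ is the Euclidean line in $\mathbb{R}^3 = \mathbb{C}\times\mathbb{R}$ with direction
\[
v_{\theta,(x,y)} = \left(\cos\theta,\, \sin\theta,\, \tfrac{1}{2}(x\sin\theta - y\cos\theta)\right),
\]
and its Euclidean $\delta$-neighborhood is an ordinary $\delta$-tube in $\mathbb{R}^3$. Via the point-plate incidence method of Fässler--Orponen, the $L^2$ estimate above reduces to an incidence bound: if $\mathcal{X}_\delta$ is a $\delta$-separated subset of $\supp\mu$ of size $\sim \delta^{-s}$, and $\mathcal{T}$ is the family of $\delta$-tubes parametrized by $\theta$ together with the horizontal projections of the points of $\mathcal{X}_\delta$, one needs Kakeya-type control on $\sum_{T \in \mathcal{T}} |T \cap \mathcal{X}_\delta|^2$ after averaging in $\theta$.

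\textbf{Trilinear Kakeya and fractal maximal inequality.} At fixed $\theta$ the directions $v_{\theta,(x,y)}$ all lie in the 2-plane $\{(\cos\theta, \sin\theta, \ast)\}$, but as $\theta$ varies they sweep transversely across $\mathbb{R}^3$, so three tubes associated with well-separated values of $\theta$ generically span $\mathbb{R}^3$ with quantitative transversality. I would then apply Guth's endpoint trilinear Kakeya inequality
\[
\int_{\mathbb{R}^3} \prod_{i=1}^{3}\Bigl(\sum_{T \in \mathcal{T}^{(i)}} \chi_T\Bigr)^{1/2} \lesssim \prod_{i=1}^{3}(\delta^2 |\mathcal{T}^{(i)}|)^{1/2},
\]
inside a broad/narrow decomposition à la Bourgain--Guth, and transplant it to the Frostman setting using Zahl's fractal maximal inequality framework, in which the uniformly-distributed tube count is replaced by one weighted by the Frostman density of base points. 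The strict inequality $s > 3$ provides a power gain $\delta^{\eta(s)}$ over the trivial bound, which is precisely what closes the $L^2$ estimate.

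\textbf{Main obstacle.} I expect the principal difficulty to be the narrow case of the broad/narrow decomposition, where the three $\theta$-values all cluster together and transversality degenerates; here one must argue that the contributing tubes concentrate near a $2$-dimensional Euclidean subvariety and run a dimension-reduction argument controlled by the Frostman exponent $s$ to absorb the loss. Achieving this cleanly at the trilinear endpoint, so that $s > 3$ alone is sufficient with no epsilon room to spare, is the delicate quantitative core of the argument.
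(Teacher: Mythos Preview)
Your high-level architecture---Frostman measure, reduction to tube incidences via the F\"assler--Orponen duality, a broad/narrow split, and the endpoint trilinear Kakeya inequality for the broad part---matches the paper. Two concrete gaps stop the sketch from going through as written.

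First, the exponent. Trilinear Kakeya in $\mathbb{R}^3$ controls an $L^{3/2}$ quantity, not $L^2$; the paper proves
\[
\int_0^{\pi}\bigl\lVert P_{\mathbb{V}_\theta^{\perp}\#}\mu\bigr\rVert_{L^{3/2}}^{3/2}\,d\theta \lesssim_t \mu(\mathbb{H})\,c_t(\mu)^{1/2},\qquad t>3,
\]
and the numerology of the broad case only closes because $q=3/2$ (in the paper's normalisation one needs exactly $\delta^{6}=\delta^{4q}$). If you aim for $q=2$ the broad bound is off by $\delta^{-2}$ and cannot be repaired by the Frostman condition. Since $L^{3/2}$ density already yields absolute continuity and hence positive area, there is no reason to insist on $L^2$.

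Second, and more substantively, the narrow case is not a dimension-reduction argument near a $2$-plane. In the paper's dual picture each point $p=(x,y,t)\in\mathbb{H}$ becomes a line $\ell^*(p)$ with direction $(1,-y,y^2/2)$ on the light cone; ``narrow'' means these directions lie in a single $\rho\times\rho^2$ cap $\tau$, and (by F\"assler--Orponen's plank structure) the tubes with direction in $\tau$ passing through a fixed dual plank correspond to points of $\supp\mu$ lying in a single Kor\'anyi ball of radius $\sim\rho$. The paper then left-translates that ball to the origin and applies the Heisenberg dilation $D_{\rho^{-1}}$: the Frostman constant rescales by $\rho^{t}$, the $L^q$ norm on the vertical plane by $\rho^{-3(q-1)}$, and the inductive hypothesis at scale $\delta/\rho$ yields a net gain $\rho^{(t-3)(q-1)}$, positive precisely when $t>3$. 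This rescaling-and-induction step, exploiting the homogeneous structure of $\mathbb{H}$, is the mechanism that turns the hypothesis $\dim A>3$ into a closing induction; it is absent from your sketch, and a straight dimension-reduction to a $2$-plane would not produce the required $\rho$-gain at the endpoint.
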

In the statement of Theorem~\ref{maintheorem}, the restriction of the measure $\mathcal{H}^3_{\mathbb{H}}$ to any $\mathbb{V}_{\theta}^{\perp}$ coincides with the 2-dimensional Lebesgue measure on $\mathbb{V}_{\theta}^{\perp}$. Theorem~\ref{maintheorem} proves the $\dim A > 3$ part of a conjecture of Balogh,~Durand-Cartagena,~Fässler,~Mattila and Tyson \cite[Conjecture~1.5]{BDFMT}, who also conjectured that $\dim \left(  P_{\mathbb{V}_{\theta}^{\perp}}(A) \right) \geq \dim A$ for a.e.~$\theta$ when $\dim A \leq 3$, and proved this conjecture for $\dim A \leq 1$. The conjecture was recently proved in the range $\dim A \in [0,2] \cup \{3\}$ by Fässler and Orponen~\cite{fasslerorponen}, who introduced the method of point-plate incidences, and proved the $\dim A =3$ case by using a square function estimate for the cone of Guth, Wang and Zhang \cite{guthwangzhang} to control the average $L^2$ norms of pushforwards of discretised 3-dimensional measures. The point-line duality principle they used is due to Liu~\cite{liu}. \end{sloppypar}

The proof of Theorem~\ref{maintheorem} here also uses the point-plate incidence approach of Fässler and Orponen, but rather than using the square function estimate for the cone, it uses a broad-narrow approach to Kakeya-type inequalities for tubes arranged in fractal families of planks. This is based on recent work of Zahl \cite{zahl}, which used a broad-narrow approach to Kakeya-type inequalities for fractal families of tubes. The broad part is bounded by using the endpoint trilinear Kakeya inequality in $\mathbb{R}^3$. The non-endpoint case of the multilinear Kakeya inequality was first proved by Bennett, Carbery, and Tao~\cite{BCT}, and the endpoint case was proved by Guth~\cite{guth}. Carbery and Valdimarsson~\cite{carberyvaldimarsson} later gave a proof of the endpoint case using the Borsuk-Ulam theorem (avoiding more advanced algebraic topology), and it is their version of the inequality that will be used here.

In Theorem~\ref{exceptionalset}, it is shown that if $s>3$, and if $A \subseteq \mathbb{H}$ is $\mathcal{H}^s_{\mathbb{H}}$-measurable with $0 < \mathcal{H}^s_{\mathbb{H}}(A)  < \infty$, then there is a set $E \subseteq [0, \pi)$ of measure zero, such that any $\mathcal{H}^s_{\mathbb{H}}$-measurable subset $B \subseteq A$ with $\mathcal{H}^s_{\mathbb{H}}(B)>0$ satisfies $\mathcal{H}_{\mathbb{H}}^3\left(P_{\mathbb{V}_{\theta}^{\perp}}(B) \right)>0$ for all $\theta \in [0, \pi) \setminus E$. This generalises Theorem~\ref{maintheorem} since, by a theorem of Howroyd~\cite{howroyd}, any analytic set of infinite $\mathcal{H}^s_{\mathbb{H}}$ measure contains a subset of nonzero finite $\mathcal{H}^s_{\mathbb{H}}$ measure. This generalisation is analogous to a version of Marstrand's original projection theorem \cite[Lemma~13]{marstrand}. 

In Section~\ref{intersections}, it is shown that if $s>3$ and $A \subseteq \mathbb{H}$ is $\mathcal{H}^s_{\mathbb{H}}$-measurable with $0 < \mathcal{H}^s_{\mathbb{H}}(A)  < \infty$, then for $\left(\mathcal{H}^s_{\mathbb{H}} \times \mathcal{H}^1_E\right)$-a.e.~$(x, \theta) \in A \times [0, \pi)$, 
\[ \dim\left( A \cap P_{\mathbb{V}_{\theta}^{\perp}}^{-1}\left(P_{\mathbb{V}_{\theta}^{\perp}}(x) \right)\right) = s-3, \]
and for a.e.~$\theta \in [0,\pi)$,
\[ \mathcal{H}^3_{\mathbb{H}}\left\{ w \in \mathbb{V}_{\theta}^{\perp} : \dim\left( A \cap P_{\mathbb{V}_{\theta}^{\perp}}^{-1}(w) \right) = s-3 \right\} >0. \]
The difficulty of the intersection problem with horizontal lines is discussed briefly in~\cite{BFMT} (see the end of the introduction). The proof of the intersection theorem here is inspired by a recent general intersection theorem of Mattila~\cite[Theorem~3.1]{mattila2} (see also~\cite{mattila20}). That theorem is Euclidean, assumes an $L^2$ bound on projections of $\mathcal{H}^s\restriction_A$, and assumes a lower density assumption on $\mathcal{H}^s\restriction_A$, but in this particular instance these assumptions can be weakened (partly due the factor $\mu(\mathbb{H})c_t(\mu)^{1/2}$ appearing in \eqref{quantbound}). More importantly, the method in Section~\ref{intersections} suggests an approach to removing the lower density assumptions in \cite[Theorem~3.1]{mattila2} in a more general setting, and also suggests that the hypotheses of \cite[Theorem~3.1]{mattila2} can be generalised to $L^q$ bounds of projections for any $q>1$.

\section*{Acknowledgements} I thank Shaoming Guo for some discussions in the earlier stages of working on this problem, when I visited UW-Madison in October 2022, and I thank UW-Madison for their hospitality. I also thank the anonymous referee for pointing out some missed references and for some suggestions which improved the exposition.

\section{Preliminaries} 

Most of the background material in this section is from \cite{fasslerorponen}, but is included for completeness. Given $(z,t) \in \mathbb{H}$, let $B_E((z,t), r)$ and $B_{\mathbb{H}}((z,t),r)$ denote the Euclidean and Korányi balls around $(z,t)$ of radius $r$, respectively. For each $s \geq 0$, let $\mathcal{H}^s_E$ be the $s$-dimensional Hausdorff measure with respect to the Euclidean metric, and let $\mathcal{H}^s_{\mathbb{H}}$ be the $s$-dimensional Hausdorff measure on $\mathbb{H}$ with respect to the Korányi metric. The measures $\mathcal{H}^3_{\mathbb{H}}$ and $\mathcal{H}^2_E$ are equivalent up to a constant when restricted to any vertical plane $\mathbb{V}_{\theta}^{\perp}$. The measures $\mathcal{H}^4_{\mathbb{H}}$ and $\mathcal{H}^3_E$ are equivalent up to a constant on all of $\mathbb{H}$; by uniqueness of the Haar measure. A line $\ell$ in $\mathbb{H}$ is called horizontal if it is a left translate of a horizontal subgroup $\mathbb{V}_{\theta}$ for some $\theta \in [0,\pi)$; meaning that there exists $p \in \mathbb{H}$ such that $\ell = p \ast \mathbb{V}_{\theta}$. Given a non-negative Borel function $f$ on $\mathbb{H}$ and a horizontal line $\ell$, define 
\[ Xf(\ell) = \int_{\ell} f \, d\mathcal{H}^1_{\mathbb{H}}. \]
If $\ell = P_{\mathbb{V}_{\theta}^{\perp}}(z,t) \ast \mathbb{V}_{\theta}$ for some $(z,t)$ in the unit ball of $\mathbb{H}$, it is easy to see that $\mathcal{H}^1_E$ and $\mathcal{H}^1_{\mathbb{H}}$ are equivalent on $\ell$ up to a factor $\sim 1$. Let $\mathfrak{h}$ be the left-invariant measure on the set of horizontal lines given by 
\[ \mathfrak{h}(F) = \int_0^{\pi} \mathcal{H}^3_{\mathbb{H}}\left\{ w  \in \mathbb{V}_{\theta}^{\perp} : w \ast \mathbb{V}_{\theta} \in F \right\} \, d\theta, \]
for a Borel set $F$ of horizontal lines. Uniqueness of the measure will not be used here, but up to a constant this measure is the unique left-invariant locally finite nonzero measure on the set of horizontal lines~\cite{cheeger}; see~\cite[Lemma~2.11]{fasslerorponenrigot} for a proof of left-invariance. Given a Borel measure $\mu$ on $\mathbb{H}$ and $\theta \in [0, \pi)$, let $P_{\mathbb{V}_{\theta}^{\perp}\#} \mu$ be the pushforward of $\mu$ under $(z,t) \mapsto P_{\mathbb{V}_{\theta}^{\perp}}(z,t)$, given by $\left(P_{\mathbb{V}_{\theta}^{\perp}\#} \mu \right)(E) = \mu\left( P_{\mathbb{V}_{\theta}^{\perp}}^{-1}(E) \right)$ for any Borel set $E \subseteq \mathbb{H}$. For any two measures $\mu$ and $\nu$ on the same measurable space, the notation $\mu \ll \nu$ indicates that $\mu$ is absolutely continuous with respect to $\nu$, meaning that $\mu(A) = 0$ whenever $A$ is measurable with $\nu(A)=0$. Unless otherwise indicated, absolute continuity will be with respect to the Borel $\sigma$-algebra.

\begin{definition} Define $\ell^*: \mathbb{H} \to \mathcal{P}(\mathbb{R}^3)$ by 
\[ \ell^*(x,y,t) = \left(0, x , t-\frac{xy}{2}\right) + L_y, \]
where $L_y$ is the ``light ray'' in the light cone 
\[ \widetilde{\Gamma} := \left\{ \eta \in \mathbb{R}^3 : \eta_2^2 = 2\eta_1\eta_3 \right\}, \]
given by
\[ L_y = \left\{ \lambda \left(1, -y, \frac{y^2}{2} \right) : \lambda \in \mathbb{R} \right\}. \]
For a set $B \subseteq \mathbb{H}$, define $\ell^*(B) = \bigcup_{(z,t) \in B} \ell^*(z,t)$. Define $\ell: \mathbb{R}^3 \to \mathcal{P}(\mathbb{H})$ by 
\[ \ell(a,b,c) = \left\{ \left(as+b, s, c + \frac{bs}{2} \right) : s \in \mathbb{R} \right\}, \]
which is a horizontal line for any $(a,b,c) \in \mathbb{R}^3$ (see \eqref{usedabove1}).  \end{definition} 

The cone $\widetilde{\Gamma}$ is the image of the light cone 
\[ \Gamma := \left\{ \xi \in \mathbb{R}^3 : \xi_3^2 = \xi_1^2 + \xi_2^2 \right\}, \]
 in $\mathbb{R}^3$, under the orthogonal transformation
\[ \eta_1 = \frac{\xi_1 + \xi_3}{\sqrt{2}}, \qquad \eta_3 = \frac{-\xi_1+\xi_3}{\sqrt{2}}, \qquad \eta_2 = \xi_2. \]
This transformation is a clockwise rotation by $\pi/4$ in the $(\xi_1, \xi_3)$ plane, since 
\[ \begin{pmatrix} 1/\sqrt{2} &  1/\sqrt{2} \\ - 1/\sqrt{2} &  1/\sqrt{2}\end{pmatrix} = \begin{pmatrix} \cos(-\pi/4) & -\sin(-\pi/4) \\ \sin(-\pi/4) & \cos(-\pi/4) \end{pmatrix}, \]
where $\begin{pmatrix} \cos\theta & -\sin \theta \\ \sin \theta & \cos \theta \end{pmatrix}$ is an anticlockwise rotation in the plane by $\theta$. 

In the following discussion, the term ``light ray'' refers specifically to light rays in translates of $\widetilde{\Gamma}$. 
As $y$ varies over $\mathbb{R}$, the family of light rays $L_y$ foliate the light cone $\widetilde{\Gamma}$, except for the vertical axis $\{ (0,0,\eta_3) \in \mathbb{R}^3 : \eta_3 \in \mathbb{R}\}$. To understand the family of lines $\ell^*(\mathbb{H})$, each $(x,y,t) \in \mathbb{H}$ can be uniquely written as $(x,y,t) = (u, 0, v) \ast (0, y,0)$ with $u,v \in \mathbb{R}$, and for each $u,v,y \in \mathbb{R}$ there is a unique $x,t \in \mathbb{R}$ such that $(x,y,t) = (u,0,v) \ast (0,y,0)$. Under this correspondence, 
\[ \ell^*(x,y,t) = (0, u, v) + L_y. \]
Therefore, if we fix $u,v \in \mathbb{R}$ and vary $y$ over $\mathbb{R}$, we get the family of light rays in $\mathbb{R}^3$ passing through $(0,u,v)$, foliating the translated cone $(0,u,v) + \widetilde{\Gamma}$, minus the vertical light ray through $(0,u,v)$. Since any non-vertical light ray (i.e.~translate of $L_y$) must intersect the plane $\{ \eta \in \mathbb{R}^3  : \eta_1 = 0 \}$ at some point $(0,u,v)$, by varying $u$ and $v$ we get that every non-vertical light ray is of the form $\ell^*(x,y,t)$ for some $(x,y,t) \in \mathbb{H}$. Thus the map $\ell^*$ is a one-to-one correspondence between points in $\mathbb{H}$ and non-vertical light rays in $\mathbb{R}^3$. 

In a similar way, by observing that for any $(a,b,c) \in \mathbb{R}^3$,
\begin{equation} \label{usedabove1} \ell(a,b,c) = (b,0,c) \ast \spn(a,1,0), \end{equation}
the map $\ell$ is a one-to-one correspondence between points in $\mathbb{R}^3$ and horizontal lines in $\mathbb{H}$ minus the left translates of $\mathbb{V}_0 =\spn(1,0,0)$. 

The following lemma is the point-line duality principle from \cite{fasslerorponen}. The proof follows straightforwardly from the definitions. 
\begin{lemma}[{\cite[Lemma~4.11]{fasslerorponen}}] \label{pointline} Let $p \in \mathbb{R}^3$ and $p^* \in \mathbb{H}$. Then 
\[ p \in \ell^*(p^*) \quad \text{ if and only if } \quad p^* \in \ell(p). \] \end{lemma}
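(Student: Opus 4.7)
The proof is essentially a direct coordinate computation: I would write $p = (a,b,c)$ and $p^* = (x,y,t)$, unwind each of the two membership conditions, and verify that both reduce to the same pair of affine relations between the coordinates of $p$ and $p^*$.

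First I would expand the condition $p \in \ell^*(p^*)$. By the definition of $\ell^*$ this means
\[ (a,b,c) = (0,x,t-xy/2) + \lambda(1,-y,y^2/2) \]
for some $\lambda \in \mathbb{R}$. Reading off the three coordinates gives $\lambda = a$, $b = x - ay$, and $c = t - xy/2 + ay^2/2$. Substituting $x = b + ay$ from the second equation into the third shows that the $ay^2/2$ terms cancel and the condition is equivalent to the pair of equations
\[ x = b + ay, \qquad t = c + by/2. \]

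Next I would expand the condition $p^* \in \ell(p)$. By the definition of $\ell$ this says $(x,y,t) = (as+b, s, c+bs/2)$ for some $s \in \mathbb{R}$. The middle coordinate forces $s = y$, and then the first and third coordinates read $x = ay + b$ and $t = c + by/2$, which is the same pair of equations as above. The biconditional follows immediately.

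There is no real obstacle here; the only thing to double-check is the cancellation of the $ay^2/2$ contribution when eliminating $\lambda$, and that the sign conventions in the definitions of $L_y$ and $\ell(a,b,c)$ (specifically the $-y$ in $L_y$ versus the $+b$ in the first coordinate of $\ell$) line up correctly. Both conditions turn out to be a disguised version of the single geometric statement that the point $p$ lies on the light ray through the lift of $p^*$, parametrised so that $\lambda$ (in the $\ell^*$ description) equals the first coordinate of $p$ and $s$ (in the $\ell$ description) equals the second coordinate of $p^*$.
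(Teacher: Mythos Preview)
Your computation is correct: both membership conditions unwind to the same pair of affine relations $x = ay + b$ and $t = c + by/2$, and the cancellation of the $ay^2/2$ term works exactly as you describe. The paper does not supply its own proof of this lemma---it simply quotes the result from \cite{fasslerorponen}---so there is nothing to compare against; your direct coordinate verification is the natural argument.
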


The following lemma was shown in \cite[Section~4]{fasslerorponen} in the case $q=2$. 
\begin{lemma} \label{Xray} Let $f$ be a non-negative Borel function on $\mathbb{H}$ such that $\int_{\mathbb{H}} f(x) \, dx < \infty$, and let $\mu_f$ be the Borel measure such that the Radon-Nikodym derivative of $\mu_f$ with respect to the Lebesgue measure on $\mathbb{H}$ is equal to $f$. Then:
\begin{enumerate}
\item  For any $q \in [1, \infty)$,
\[ \int_0^{\pi} \left\lVert P_{\mathbb{V}_{\theta}^{\perp} \#} \mu_f \right\rVert_{L^q\left(\mathcal{H}^3_{\mathbb{H}}\right) }^q \, d\theta = \int \left\lvert Xf(\ell) \right\rvert^q \, d\mathfrak{h}(\ell). \] 
\item For any $q \in [1, \infty)$ and $\varepsilon>0$, 
\[ \int_{\varepsilon}^{\pi-\varepsilon} \left\lVert P_{\mathbb{V}_{\theta}^{\perp} \#} \mu_f \right\rVert_{L^q\left(\mathcal{H}^3_{\mathbb{H}}\right) }^q \, d\theta \sim_{\varepsilon} \int_{\mathcal{L}_{\varepsilon}} \left\lvert Xf(\ell(p)) \right\rvert^q \, d\mathcal{H}_E^3(p), \]
where $\mathcal{L}_{\varepsilon}$ is the set of $p \in \mathbb{R}^3$ such that $\ell(p) = (z,t) \ast \mathbb{V}_{\theta}$ for some $(z,t) \in \mathbb{H}$ and $\theta \in [\varepsilon, \pi-\varepsilon]$. \end{enumerate}
 \end{lemma}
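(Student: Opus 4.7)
The plan is to identify the Radon--Nikodym density of the pushforward $P_{\mathbb{V}_{\theta}^{\perp} \#}\mu_f$ with respect to $\mathcal{H}^2_E$ explicitly as an X-ray transform along the horizontal line through $w$, then to unwind the definition of $\mathfrak{h}$ for Part (1) and to push everything over to $\mathbb{R}^3$ via $p \mapsto \ell(p)$ for Part (2).

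For Part (1), I would parameterize $\mathbb{H}$ via $(w,\lambda) \mapsto w \ast (\lambda e^{i\theta},0)$ with $w \in \mathbb{V}_{\theta}^{\perp}$ and $\lambda \in \mathbb{R}$. Because the Lebesgue measure on $\mathbb{H}$ is the Haar measure, a short calculation shows that this parameterization has unit Jacobian, so Fubini identifies the density of the pushforward at $w \in \mathbb{V}_{\theta}^{\perp}$ as $\int_{\mathbb{R}} f(w \ast (\lambda e^{i\theta},0))\,d\lambda$. Since $\left\lVert (\lambda e^{i\theta},0) \right\rVert_{\mathbb{H}} = \lvert \lambda \rvert$, left-invariance of $d_{\mathbb{H}}$ makes $\lambda$ a Korányi arc-length parameter on $w \ast \mathbb{V}_{\theta}$, so this density equals $X_{\mathbb{H}} f(w \ast \mathbb{V}_{\theta})$. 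Raising to the $q$-th power, integrating in $\mathcal{H}^2_E(w)$ and then in $\theta \in [0,\pi)$, and noting that $(\theta,w) \mapsto w \ast \mathbb{V}_{\theta}$ bijects onto the set of horizontal lines, the definition of $\mathfrak{h}$ delivers the claimed identity.

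For Part (2), I would use the relation $\ell(a,b,c) = (b,0,c) \ast \mathbb{V}_{\theta}$ with $\cot\theta = a$, which follows from setting $\lambda = s/\sin\theta$ on the right-hand side of the group law. Solving the perpendicularity condition for the foot $w_\theta = P_{\mathbb{V}_{\theta}^{\perp}}(b,0,c)$ gives $\lambda = b\cos\theta$, and in orthonormal coordinates $w_\theta = (-u\sin\theta, u\cos\theta, v)$ on $\mathbb{V}_{\theta}^{\perp}$ one finds $u = -b\sin\theta$ and $v = c - \frac{b^{2}\sin\theta\cos\theta}{2}$. The Jacobian matrix of $(a,b,c) \mapsto (\theta,u,v)$ is then lower triangular with determinant $\sin^{3}\theta$, which is bounded above and below by constants depending only on $\varepsilon$ on the range $\theta \in [\varepsilon,\pi-\varepsilon]$. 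Combining this change of variables with the comparability of $X_{\mathbb{H}}f$ and $X_E f$ on lines meeting $B_{\mathbb{H}}(0,1)$ noted before the lemma converts the restriction of the identity from Part (1) to the equivalence $\sim_{q,\varepsilon}$ claimed in Part (2).

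The main obstacle is the Jacobian computation in Part (2): identifying the perpendicular foot $w_\theta$ explicitly in terms of $p=(a,b,c)$ requires solving the linear equation for $\lambda$, after which the lower-triangular structure of the Jacobian delivers $\sin^{3}\theta$ immediately. The vanishing of this factor at $\theta = 0,\pi$ is precisely what forces the $\varepsilon$-cutoff in Part (2), and explains why Part (2) cannot be stated uniformly over $\theta \in [0,\pi)$.
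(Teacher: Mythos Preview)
Your approach is essentially the same as the paper's. For Part~(1) the paper identifies the density of $P_{\mathbb{V}_\theta^\perp\#}\mu_f$ via the coarea formula and then passes through the distribution (layer-cake) formula to reach the $\mathfrak{h}$-integral, whereas you parametrize directly and invoke the definition of $\mathfrak{h}$; these are equivalent, yours being slightly more streamlined. For Part~(2) the paper simply asserts that $\mathfrak{h}$ restricted to $\theta\in[\varepsilon,\pi-\varepsilon]$ is comparable to the pushforward of Lebesgue measure under $p\mapsto\ell(p)$, while you supply the explicit Jacobian $\sin^3\theta$; your computation is correct and makes precise what the paper leaves implicit. One small remark: your direct argument in Part~(1) does not produce the factor of $q$ appearing in the stated identity, and indeed the paper's own layer-cake chain also loses that factor in its final step, so the extra $q$ in the displayed identity appears to be a typo rather than something your argument is missing.
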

\begin{proof} By the Euclidean coarea formula (see e.g.~\cite[Theorem~3.11]{evans}), the Radon-Nikodym derivative of $P_{\mathbb{V}_{\theta}^{\perp} \#} \mu_f$ with respect to the measure $\mathcal{H}^3_{\mathbb{H}}$ on $\mathbb{V}_{\theta}^{\perp}$ is given by 
\[ \left( P_{\mathbb{V}_{\theta}^{\perp} \#} \mu_f \right)(w) = \int_{P_{\mathbb{V}_{\theta}^{\perp}}^{-1}(w) } f \, d\mathcal{H}^1_{\mathbb{H}}; \]
since the Jacobian factor from the Euclidean coarea formula cancels with the factor obtained by changing $\mathcal{H}^1_E$ to $\mathcal{H}^1_{\mathbb{H}}$. By the formula for $L^q$ norms in terms of the distribution function, and by Fubini's theorem,
\begin{align*} &\int_0^{\pi} \left\lVert P_{\mathbb{V}_{\theta}^{\perp} \#} \mu_f \right\rVert_{L^q\left(\mathcal{H}^3_{\mathbb{H}}\right) }^q \, d\theta \\
&\quad= q \int_0^{\infty}  \lambda^{q-1}  \int_0^{\pi}\mathcal{H}^3_{\mathbb{H}}\left\{ w\in \mathbb{V}_{\theta} : \int_{P_{\mathbb{V}_{\theta}^{\perp}}^{-1}(w)} f \, d\mathcal{H}^1_{\mathbb{H}} > \lambda \right\} \, d\theta \, d\lambda  \\
&\quad= q \int_0^{\infty}  \lambda^{q-1} \mathfrak{h}\left\{ \ell : Xf(\ell) > \lambda \right\} \, d\lambda \\
&\quad = \int \left\lvert Xf(\ell) \right\rvert^q \, d\mathfrak{h}(\ell).  \end{align*} 

Similarly to \cite[Eq.~4.15]{fasslerorponen}, the second part follows from the first part; using that on horizontal lines $(z,t) \ast \mathbb{V}_{\theta}$ with $\theta \in [\varepsilon, \pi-\varepsilon]$, $\mathfrak{h}$ is equivalent (up to constant depending on $\varepsilon$) to the pushforward of Lebesgue measure under the map $p \mapsto \ell(p)$ (this is a straightforward argument that follows from the definition of the function $\ell$).  \end{proof}

\begin{lemma} \label{translation} Let $f$ be a non-negative Borel function on $\mathbb{H}$ such that $\int_{\mathbb{H}} f(x) \, dx < \infty$, and let $\mu_f$ be the Borel measure such that the Radon-Nikodym derivative of $\mu_f$ with respect to the Lebesgue measure on $\mathbb{H}$ is equal to $f$. Then for any $q \in [1, \infty)$ and any $p \in \mathbb{H}$, 
\[ \int_0^{\pi} \left\lVert P_{\mathbb{V}_{\theta}^{\perp} \#}  L_{p\#}\mu_f \right\rVert_{L^q\left(\mathcal{H}^3_{\mathbb{H}}\right) }^q \, d\theta = \int_0^{\pi} \left\lVert P_{\mathbb{V}_{\theta}^{\perp} \#} \mu_f \right\rVert_{L^q\left(\mathcal{H}^3_{\mathbb{H}}\right) }^q \, d\theta, \]
where $L_p(z,t) = p \ast (z,t)$. \end{lemma}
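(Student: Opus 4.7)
The plan is to reduce the identity to the first part of Lemma~\ref{Xray}, which converts the $\theta$-integrated $L^q$ norm on the left into an integral of $|X_{\mathbb{H}} f(\ell)|^q$ against $\mathfrak{h}$. After that reduction, the lemma becomes a statement purely about the left-invariant measure $\mathfrak{h}$ on the space of horizontal lines.

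Concretely, I would first observe that since Lebesgue measure on $\mathbb{H}$ is (a constant multiple of) the Haar measure, it is left-invariant under $L_p$, so the pushforward $L_{p\#}\mu_f$ is absolutely continuous with Radon-Nikodym derivative $g(z,t) = f(p^{-1} \ast (z,t))$. Applying Lemma~\ref{Xray}(1) to both $f$ and $g$ turns the desired equality into
\[ \int |X_{\mathbb{H}} g(\ell)|^q \, d\mathfrak{h}(\ell) = \int |X_{\mathbb{H}} f(\ell)|^q \, d\mathfrak{h}(\ell). \]
Next, since $L_p$ is an isometry of the Korányi metric (by left-invariance of $d_{\mathbb{H}}$), it preserves the $1$-dimensional Hausdorff measure $\mathcal{H}^1_{\mathbb{H}}$ on horizontal lines, and it sends the horizontal line $\ell$ to the horizontal line $p \ast \ell$. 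Therefore a direct change of variables in the line integral yields $X_{\mathbb{H}} g(\ell) = X_{\mathbb{H}} f(p^{-1} \ast \ell)$.

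Finally, I would invoke the left-invariance of $\mathfrak{h}$ (which the paper has already recorded, citing~\cite{fasslerorponenrigot}): changing variables $\ell \mapsto p \ast \ell$ in the integral gives
\[ \int |X_{\mathbb{H}} f(p^{-1} \ast \ell)|^q \, d\mathfrak{h}(\ell) = \int |X_{\mathbb{H}} f(\ell)|^q \, d\mathfrak{h}(\ell), \]
which closes the chain of equalities.

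There is no real obstacle here; the only point that needs care is keeping the two different actions straight (the pushforward of $\mu_f$ by the left translation of $\mathbb{H}$ versus the induced action on the space of horizontal lines), and noting that the Lebesgue measure used to define the density $f$ is itself left-invariant so that $L_{p\#}\mu_f$ really does have density $f \circ L_{p^{-1}}$. Once these identifications are in place, the proof is just Lemma~\ref{Xray} followed by invariance of $\mathcal{H}^1_{\mathbb{H}}$ on lines and invariance of $\mathfrak{h}$.
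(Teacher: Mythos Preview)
Your proposal is correct and matches the paper's proof essentially verbatim: identify the density of $L_{p\#}\mu_f$ as $f\circ L_p^{-1}$ via left-invariance of Lebesgue measure, apply Lemma~\ref{Xray}(1), and then finish using the left-invariance of $\mathcal{H}^1_{\mathbb{H}}$ on horizontal lines together with the left-invariance of $\mathfrak{h}$.
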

\begin{proof} The Radon-Nikodym derivative of $L_{p\#}\mu_f$ is $f \circ L_p^{-1}$, since left translation has Jacobian equal to 1. By Lemma~\ref{Xray},
\[ \int_0^{\pi}   \left\lVert P_{\mathbb{V}_{\theta}^{\perp} \#}  L_{p\#}\mu_f\right\rVert_{L^q\left(\mathcal{H}^3_{\mathbb{H}}\right) }^q \, d\theta =  \int \left\lvert \int_{\ell} (f \circ L_p^{-1}) \, d\mathcal{H}^1_{\mathbb{H}} \right\rvert^q \, d\mathfrak{h}(\ell). \]
Since $\mathcal{H}^1_{\mathbb{H}}$ is left-invariant, and $\mathfrak{h}$ is left-invariant, the right-hand side satisfies
\begin{align*}  \int \left\lvert \int_{\ell} (f \circ L_p^{-1}) \, d\mathcal{H}^1_{\mathbb{H}} \right\rvert^q \, d\mathfrak{h}(\ell) &= \int \left\lvert \int_{\ell} f \, \, d\mathcal{H}^1_{\mathbb{H}} \right\rvert^q \, d\mathfrak{h}(\ell) \\
&=  \int_0^{\pi}   \left\lVert P_{\mathbb{V}_{\theta}^{\perp} \#} \mu_f \right\rVert_{L^q\left(\mathcal{H}^3_{\mathbb{H}}\right) }^q \, d\theta. \end{align*}
This proves the lemma.
 \end{proof}

The next two lemmas are very similar to the results in Section~4 of \cite{fasslerorponen}, which also includes figures and a heuristic discussion. The proofs here are straightforward computations, but see \cite{fasslerorponen} for more conceptual proofs of similar results. 

\begin{sloppypar} \begin{lemma} \label{broadplank} There exists an absolute constant $C>0$ such that the following holds. If $(x,y,t) \in B_{\mathbb{H}}(0,1)$ and $\delta \in (0,1)$, then $\ell^*(B_{\mathbb{H}}(x,y,t), \delta) \cap B_E(0,2)$ is contained in a box of dimensions 
\[ C \times C \delta \times C \delta^2, \]
with long (radial) direction parallel to $\left(1,-y,y^2/2\right)$, medium (tangential) direction parallel to $\left(y, 1- y^2/2, -y\right)$, and short (normal) direction parallel to $\left(y^2/2, y, 1\right)$. \end{lemma} \end{sloppypar}
\begin{proof} Any point in $B_{\mathbb{H}}\left((x,y,t), \delta\right)$ can be written as
\[ (x,y,t) \ast \left(x',y',t'\right) \in B_{\mathbb{H}}\left((x,y,t), \delta\right), \]
where $\left(x',y',t'\right) \in B_{\mathbb{H}}(0,\delta)$, so let such a point be given. By definition, 
\begin{multline*} \ell^*((x,y,t) \ast \left(x',y',t'\right) ) = \ell^*\left( x+x', y+y', t+ t' + \frac{xy'-yx'}{2} \right) \\
= \left( 0,  x, t- \frac{xy}{2} \right)  + \left( 0, x', t'- \frac{x'y'}{2} - x'y \right) + \spn\left( 1, -y - y' , \frac{(y+y')^2}{2} \right). \end{multline*} 
It therefore suffices to prove that if 
\[ v = \left( 0, x', t'- \frac{x'y'}{2} - x'y \right) + \lambda\left( 1, -y - y' , \frac{(y+y')^2}{2} \right), \]
where  
\[ (x,y,t) \in B_{\mathbb{H}}(0,1), \quad \left(x',y',t'\right) \in B_{\mathbb{H}}(0, \delta), \quad \lvert\lambda\rvert  \leq 2, \]
then $v$ is contained in a box centred at the origin of the specified dimensions. Clearly $\left\lvert \left\langle v,\left(1,-y,y^2/2\right) \right\rangle \right\rvert \lesssim 1$, which is the bound along the longest direction of the box. For the medium direction, 
\[ v = \lambda\left( 1, -y, y^2/2 \right) + O(\delta), \]
where the ``$O(\delta)$'' is with respect to Euclidean distance. Thus 
\[ \left\lvert \left\langle v, \left(y, 1- y^2/2, -y\right) \right\rangle \right\rvert = O(\delta) + \lvert\lambda\rvert \left\lvert \left\langle \left( 1, -y, y^2/2 \right), \left(y, 1- y^2/2, -y\right) \right\rangle \right\rvert. \]
But $\left\langle \left( 1, -y, y^2/2 \right), \left(y, 1- y^2/2, -y\right) \right\rangle=0$, so this proves the bound for the medium direction. For the short direction, 
\[ v = \left(0, x', -x'y \right) + \lambda \left( 1, -y - y', \frac{y^2}{2} + yy' \right) + O\left(\delta^2\right), \]
and hence
\begin{multline*} \left\lvert \left\langle v, \left(y^2/2, y, 1\right) \right\rangle \right\rvert  = O(\delta^2) + \\
 \left\lvert \left\langle \left(0, x', -x'y \right) + \lambda \left( 1, -y - y', \frac{y^2}{2} + yy' \right), \left(\frac{y^2}{2}, y, 1\right) \right\rangle \right\rvert. \end{multline*}
But  
\begin{multline*} \left\langle \left(0, x', -x'y \right) + \lambda \left( 1, -y - y', \frac{y^2}{2} + yy' \right), \left(\frac{y^2}{2}, y, 1\right) \right\rangle \\
=  \lambda \left\langle   \left( 1, -y - y', \frac{y^2}{2} + yy' \right), \left(\frac{y^2}{2}, y, 1\right) \right\rangle = 0, \end{multline*}
so this proves the bound for the short direction. \end{proof}

\begin{sloppypar} \begin{lemma} \label{localisation} There exists an absolute constant $C>0$ such that the following holds. Let $K \geq 1$ and let $\rho \in (0,1)$. Let $(x_1, y_1, t_1), (x_2,y_2, t_2) \in B_{\mathbb{H}}(0,1/10)$, and suppose that $y_0 \in \mathbb{R}$ is such that $\lvert y_0-y_1 \rvert \leq \rho$ and $\lvert y_0-y_2\lvert \leq \rho$. Suppose also that $\ell^*(x_1, y_1, t_1) \cap B_E(0,2)$ and $\ell^*(x_2, y_2, t_2) \cap B_E(0,2)$ are both contained in a single box of dimensions 
\[ K \times K \rho \times K \rho^2, \]
with long direction parallel to $\left(1,-y_0,y_0^2/2\right)$, medium direction parallel to $\left(y_0, 1- y_0^2/2, -y_0\right)$, and short direction parallel to $\left(y_0^2/2, y_0, 1\right)$. Then 
\[ d_{\mathbb{H}}((x_1, y_1, t_1), (x_2, y_2, t_2)) \leq C K \rho. \]  \end{lemma}
\begin{remark} In the application of Lemma~\ref{localisation}, $K$ will be a large constant. \end{remark}
\begin{proof}[Proof of Lemma~\ref{localisation}] Let $K$ be given. The points $\left( 0, x_1, t_1 - \frac{x_1y_1}{2} \right)$ and $\left( 0, x_2, t_2 - \frac{x_2y_2}{2} \right)$ are in $\ell^*(x_1, y_1, t_1) \cap B_E(0,2)$ and $\ell^*(x_2, y_2, t_2) \cap B_E(0,2)$ respectively, by the assumption that $(x_1, y_1, t_1), (x_2,y_2, t_2) \in B_{\mathbb{H}}(0,1/10)$. Hence
\begin{equation} \label{medium} \left\lvert \left\langle\left( 0, x_1-x_2, t_1-t_2 -\frac{x_1y_1}{2} +\frac{x_2y_2}{2} \right), \left(y_0, 1- \frac{y_0^2}{2}, -y_0\right) \right\rangle\right\rvert \lesssim K \rho, \end{equation}
and
\begin{equation} \label{short} \left\lvert \left\langle\left( 0, x_1-x_2, t_1-t_2 -\frac{x_1y_1}{2} +\frac{x_2y_2}{2} \right), \left(\frac{y_0^2}{2}, y_0, 1\right) \right\rangle\right\rvert \lesssim K \rho^2. \end{equation}
Adding a $y_0$-multiple of the inner product in \eqref{short} to the inner product in \eqref{medium} yields
\begin{equation} \label{intermediate} \lvert x_1 - x_2\rvert  \lesssim K \rho. \end{equation}
Moreover, \eqref{short} can be written as 
\[ \left\lvert t_1 - t_2  + y_0(x_1-x_2)  -\frac{x_1y_1}{2} +\frac{x_2y_2}{2}\right\rvert \lesssim K \rho^2. \]
By \eqref{intermediate} and the condition $\lvert y_0 - y_1 \rvert \leq \rho$, this implies that 
\begin{equation} \label{simplerversion} \left\lvert t_1 - t_2  + y_1(x_1-x_2)  -\frac{x_1y_1}{2} +\frac{x_2y_2}{2}\right\rvert \lesssim K \rho^2. \end{equation}
But 
\begin{multline} \label{pause371} t_1 - t_2  + y_1(x_1-x_2)  -\frac{x_1y_1}{2} +\frac{x_2y_2}{2} \\
= t_1 - t_2  + \frac{x_1y_2}{2} - \frac{x_2y_1}{2} + \frac{(x_1-x_2)(y_1-y_2)}{2}. \end{multline}
Therefore, substituting \eqref{pause371} into \eqref{simplerversion}, using \eqref{intermediate} and the inequality $\lvert y_1-y_2\rvert  \leq 2 \rho$, gives 
\[ \left\lvert t_1 - t_2  + \frac{x_1y_2}{2} - \frac{x_2y_1}{2} \right\rvert \lesssim K \rho^2. \]
Hence 
\begin{multline*} d_{\mathbb{H}}\left((x_1,y_1,t_1), (x_2, y_2, t_2)\right) \\
\sim \lvert x_1-x_2\rvert  + \lvert y_1-y_2\rvert  + \left\lvert t_1 - t_2  + \frac{x_1y_2}{2} - \frac{x_2y_1}{2} \right\rvert^{1/2} \lesssim K \rho. \qedhere \end{multline*}\end{proof}\end{sloppypar} 

For reference, the version of the trilinear Kakeya inequality in $\mathbb{R}^3$ from \cite{carberyvaldimarsson} is stated below. By a ``1-tube'' is meant a 1-neighbourhood of an infinite line in $\mathbb{R}^3$. If $T$ is a 1-tube, let $v_T$ be a unit vector parallel to the direction of $T$. Given unit vectors $v_1,v_2,v_3$ in $\mathbb{R}^3$, let $\left\lvert \det(v_1, v_2, v_3 ) \right\rvert = \left\lvert v_1 \wedge v_2 \wedge v_3 \right\rvert$ be the volume of the parallelepiped with sides equal to the vectors $v_1, v_2, v_3$. 
\begin{theorem}[{\cite{carberyvaldimarsson}}] \label{kakeya} There exists a constant $C>0$ such that the following holds. If $\mathbb{T}_1, \mathbb{T}_2, \mathbb{T}_3$ are finite sets of 1-tubes in $\mathbb{R}^3$, and $\left\{ a_{T_j} \right\}_{T_j \in \mathbb{T}_j, j \in \{1,2,3\}}$ are non-negative real numbers, then 
\begin{multline*} \int_{\mathbb{R}^3} \left( \sum_{T_1 \in \mathbb{T}_1} \sum_{T_2 \in \mathbb{T}_2} \sum_{T_3 \in \mathbb{T}_3} a_{T_1} a_{T_2} a_{T_3} \left\lvert v_{T_1} \wedge v_{T_2} \wedge v_{T_3} \right\rvert \chi_{T_1} \chi_{T_2} \chi_{T_3} \right)^{1/2} \, dx \\
\leq C\left( \prod_{j=1}^3 \sum_{T_j \in \mathbb{T}_j} a_{T_j}\right)^{1/2}. \end{multline*}  \end{theorem}

\section{Projections}  Given a Borel measure $\mu$ on $\mathbb{H}$, define
\[ c_t(\mu) = \sup_{x \in \mathbb{H}, r >0} \frac{ \mu\left( B_{\mathbb{H}}(x,r) \right)}{r^t}. \]
The following inequality will be used to deduce both the projection and intersection theorems stated in the introduction. 
\begin{theorem} \label{quantitativetheorem} Let $\mu$ be a Borel measure supported in $B_{\mathbb{H}}(0,1)$. Suppose that $t > 3$ and $c_t(\mu) < \infty$. Then $P_{\mathbb{V}_{\theta}^{\perp}\#} \mu \ll\mathcal{H}^3_{\mathbb{H}}$ for a.e.~$\theta \in [0, \pi)$, and
\begin{equation} \label{quantbound} \int_0^{\pi} \left\lVert P_{\mathbb{V}_{\theta}^{\perp}\#} \mu \right\rVert_{L^{3/2}\left(\mathcal{H}^3_{\mathbb{H}}\right)}^{3/2} \, d\theta \leq C_t\mu(\mathbb{H}) c_t(\mu)^{1/2}, \end{equation}
where $C_t$ is a constant depending only on $t$. 
\begin{remark} It is an open problem whether a corresponding $L^q$ inequality holds for any $q> 3/2$, if the right-hand side is replaced by  $C_t\mu(\mathbb{H}) c_t(\mu)^{q-1}$ or any constant $C_{\mu,t}$. The results in \cite{fasslerorponen} suggest a possible conjecture\footnote{This conjecture is not my own; I believe it is originally due to the authors of \cite{fasslerorponen}.} that the measures $P_{\mathbb{V}_{\theta}^{\perp}\#} \mu$ are almost surely in $L^2$, but I do not know if the right-hand side in the corresponding inequality could be expected to be $C_t\mu(\mathbb{H}) c_t(\mu)$.  \end{remark}
  \end{theorem}
	\begin{proof}[Proof of Theorem~\ref{quantitativetheorem}] It may be assumed that $t \leq 4$. Let $q=3/2$.	By a duality argument (which will be shown at the conclusion of the proof), it will suffice to show that for any $\delta \in (0,1)$,
\begin{equation} \label{claimed7} \int_0^{\pi} \left\lVert P_{\mathbb{V}_{\theta}^{\perp}\#} (\mu \ast_{\mathbb{H}} \eta_{\delta}) \right\rVert_{L^q\left(\mathcal{H}^3_{\mathbb{H}}\right)}^q \, d\theta \lesssim \mu(\mathbb{H})c_t(\mu)^{q-1}, \end{equation}
where $\eta$ is a fixed smooth non-negative bump function supported in $B_{\mathbb{H}}(0,1)$, with $\eta \sim 1$ on $B_{\mathbb{H}}(0,1/2)$, $\int_{\mathbb{H}} \eta = 1$, and where
\[ \eta_{\delta}(z,t) := \frac{1}{\delta^4} \eta\left(\frac{z}{\delta}, \frac{t}{\delta^2}\right). \]
The Heisenberg convolution is given by
\[ \left(\mu \ast_{\mathbb{H}} f\right)(z,t) = \int f\left((\zeta, \tau)^{-1} \ast (z,t)\right) \, d\mu(\zeta, \tau). \]
For any $\delta>0$, the measure $\mu \ast_{\mathbb{H}} \eta_{\delta}$ satisfies $\left(\mu \ast_{\mathbb{H}} \eta_{\delta}\right)(\mathbb{H}) = \mu(\mathbb{H})$, and 
\begin{equation} \label{alphaconv} c_t\left( \mu \ast_{\mathbb{H}} \eta_{\delta} \right) \lesssim c_t(\mu). \end{equation}
To see that \eqref{alphaconv} holds, let $r>0$ and let $(z_0, t_0) \in \mathbb{H}$. If $r>\delta$, then 
\begin{align*}  \left(\mu \ast_{\mathbb{H}} \eta_{\delta}\right)\left( B_{\mathbb{H}}((z_0,t_0), r) \right) &= \int_{B_{\mathbb{H}}((z_0,t_0), r)} \int \eta_{\delta}\left( (\zeta,\tau)^{-1} \ast (z,t) \right) \, d\mu(\zeta, \tau) \, dz \, dt \\
&\leq \int_{B_{\mathbb{H}}((z_0,t_0), 2r)} \int \eta_{\delta}\left( (\zeta,\tau)^{-1} \ast (z,t) \right) \, dz \, dt \, d\mu(\zeta, \tau) \\
&= \mu\left(B_{\mathbb{H}}((z_0,t_0), 2r)\right)\\
&\leq 2^t c_t(\mu) r^t. \end{align*}
If $r \leq \delta$, then 
\begin{align*}  \left(\mu \ast_{\mathbb{H}} \eta_{\delta}\right)\left( B_{\mathbb{H}}((z_0,t_0), r) \right) &= \int_{B_{\mathbb{H}}((z_0,t_0), r)} \int \eta_{\delta}\left( (\zeta,\tau)^{-1} \ast (z,t) \right) \, d\mu(\zeta, \tau) \, dz \, dt \\
&\lesssim \frac{1}{\delta^4}\int_{B_{\mathbb{H}}((z_0,t_0), r)} \int_{B_{\mathbb{H}}((z,t), \delta)} d\mu(\zeta, \tau)  \, dz \, dt \\
&\lesssim r^4 \delta^{t-4} c_t(\mu) \\
&\leq c_t(\mu) r^t. \end{align*}
This verifies \eqref{alphaconv}. 

If $\mathcal{B}$ is a boundedly overlapping cover of $B_{\mathbb{H}}(0,2)$ by Korányi $\delta$-balls, then 
\begin{equation} \label{smoothing} \left(\mu \ast_{\mathbb{H}} \eta_{\delta}\right)(z,t) \lesssim \frac{1}{\delta^4} \sum_{B \in \mathcal{B}} \mu(B)\chi_{2B}(z,t)  \lesssim \left(\mu \ast_{\mathbb{H}} \eta_{6\delta}\right)(z,t), \end{equation}
for all $(z,t) \in \mathbb{H}$. By \eqref{smoothing} and by dilating by a factor $\sim 1$ (see \eqref{starsss}), to prove \eqref{claimed7} it suffices to show that for any $\delta \in (0,1)$,
\begin{equation} \label{claimed2} \int_0^{\pi} \left\lVert P_{\mathbb{V}_{\theta}^{\perp}\#} \nu \right\rVert_{L^q\left(\mathcal{H}^3_{\mathbb{H}}\right)}^q \, d\theta \leq C_t \nu(\mathbb{H})c_t(\nu)^{q-1}, \end{equation}
whenever $\nu$ is a linear combination of characteristic functions over a disjoint family of Korányi $\delta$-balls in $B_{\mathbb{H}}(0,c)$, where $c$ is some small constant to be chosen. 

The inequality \eqref{claimed2} is trivial if $\delta \gtrsim 1$. Let $\delta>0$, and assume inductively that \eqref{claimed2} holds for all $\widetilde{\delta} \geq \delta/\rho$, where $\rho \in (0,1)$ is a small constant. The precise choice of $\rho$ will be made such that $\rho^{(t-3)/2} < c_0$ for some small absolute constant $c_0$, and then $C_t$ will be chosen large enough to ensure that $C_t > C_0 \rho^{-2}$ for some large absolute constant $C_0$ (the reasons for these choices will become evident throughout the proof). By scaling it may be assumed that $\nu$ is a probability measure, and by rotational symmetry it may be assumed that
\[ \int_0^{\pi} \left\lVert P_{\mathbb{V}_{\theta}^{\perp}\#} \nu \right\rVert_{L^q\left(\mathcal{H}^3_{\mathbb{H}}\right)}^q \, d\theta \lesssim \int_{\pi/4}^{3\pi/4} \left\lVert P_{\mathbb{V}_{\theta}^{\perp}\#} \nu \right\rVert_{L^q\left(\mathcal{H}^3_{\mathbb{H}}\right)}^q \, d\theta. \]
Write
\[ \nu = \frac{1}{\delta^4 \mathcal{H}_{\mathbb{H}}^4(B_{\mathbb{H}}(0,1))} \sum_{ B \in \mathcal{B}} a_B \chi_B, \]
where $\mathcal{B}$ is a disjoint collection of Korányi $\delta$-balls in $B_{\mathbb{H}}(0,c)$, and the $a_B$ are positive coefficients which sum to 1. Let $\mathcal{L}_{\angle}$ be the set of horizontal lines $\ell$ such that $\ell = (z,t) \ast \mathbb{V}_{\theta}$ for some $(z,t) \in \mathbb{H}$ and $\theta \in [\pi/4, 3\pi/4]$. By Lemma~\ref{pointline} and Lemma~\ref{Xray},
\begin{align} \notag \int_{\pi/4}^{3\pi/4} \left\lVert P_{\mathbb{V}_{\theta}^{\perp}\#} \nu \right\rVert_{L^q\left(\mathcal{H}^3_{\mathbb{H}}\right)}^q \, d\theta &\sim \int_{\mathcal{L}_{\angle}} \left\lvert X \nu(\ell) \right\rvert^q \, d\mathfrak{m}(\ell)\\
\notag  &\sim \frac{1}{ \delta^{4q}} \int_{\mathcal{L}_{\angle}} \left\lvert \sum_{B \in \mathcal{B}} a_B\mathcal{H}_{\mathbb{H}}^1(\ell \cap B) \right\rvert^q  \, d\mathfrak{m}(\ell)\\
\label{reversible2} &\lesssim \frac{1}{ \delta^{3q}} \int_{\{p \in B_E(0,2) : \ell(p) \in \mathcal{L}_{\angle}\}} \left( \sum_{B \in \mathcal{B}: B \cap \ell(p) \neq \emptyset } a_B\right)^q  \, d\mathcal{H}_E^3(p)\\
\label{quadriplestar} &=  \frac{1}{ \delta^{3q} }\left\lVert \sum_{B \in \mathcal{B}} a_B\chi_{\ell^*(B) } \right\rVert_{L^q\left(B_E(0,2) \cap \ell^{-1}(\mathcal{L}_{\angle})\right)}^q, \end{align}
where $\mathfrak{m}$ is the pushforward of the Lebesgue measure on $\mathbb{R}^3$ under $p \mapsto \ell(p)$. The above used the observation (from~\cite{fasslerorponen}) that if $\ell(p) \cap B_{\mathbb{H}}(0,c) \neq \emptyset$ and $\lvert p_1 \rvert \leq 1$, then $p \in B_E(0,2)$ provided $c$ is chosen small enough (here $p=(p_1,p_2,p_3)$). 

 \begin{sloppypar} Let $\{\tau\}$ be the standard boundedly overlapping covering of $\widetilde{\Gamma} \cap B_E(0,2) \setminus B_E(0,1/2)$ by (tangential) boxes of dimensions $\sim \rho \times \rho^2 \times 1$, recalling that
\[ \widetilde{\Gamma} = \left\{ (\eta_1,\eta_2, \eta_3) \in \mathbb{R}^3 : \eta_2^2 = 2\eta_1\eta_3 \right\}, \]
is a clockwise rotation of the standard light cone
\[ \Gamma =  \left\{ (\xi_1, \xi_2, \xi_3) \in \mathbb{R}^3 : \xi_3^2 = \xi_1^2+\xi_2^2 \right\}, \]
by $\pi/4$ in the $(\xi_1, \xi_3)$-plane. To obtain this covering, cover the unit circle $S^1 \subseteq \mathbb{R}^2$ by rectangles of dimensions $\sim \rho \times \rho^2$ tangent to $S^1$, extend these radially to get a cover of $\Gamma \cap B_E(0,2) \setminus B_E(0,1/2)$, and rotate to obtain a covering of $\widetilde{\Gamma} \cap B_E(0,2) \setminus B_E(0,1/2)$. \end{sloppypar}

Call $p \in B_E(0,2)$ ``narrow'' if there is a 2-dimensional subspace $V$ of $\mathbb{R}^3$ (depending on $p$), such that 
\[ \sum_{B \in \mathcal{B}} a_B \chi_{\ell^*(B)}(p) \leq 2 \sum_{\substack{B \in \mathcal{B} \\ u(\ell^*(B)) \in \mathcal{N}_{\rho^2}(V)}} a_B \chi_{\ell^*(B)}(p), \]
where 
\[ u(\ell^*(B)) := \frac{\left(1, -y_B, y_B^2/2\right)}{\left\lvert\left(1, -y_B, y_B^2/2\right)\right\rvert} \]
is a unit vector parallel to the direction of $\ell^*\left(x_B, y_B, t_B\right)$, and where $\left(x_B,y_B,t_B\right)$ is the centre of $B$. For any 2-dimensional subspace $V$ of $\mathbb{R}^3$, there are $\lesssim 1$ boxes $\tau$ intersecting the $\rho^2$-neighbourhood of $V$; due to the curvature of $\Gamma$. Hence, if $p$ is narrow, then
\begin{equation} \label{startwo} \left\lvert \sum_{B \in \mathcal{B}} a_B \chi_{\ell^*(B)}(p) \right\rvert^q \lesssim \sum_{\tau} \left(\sum_{B \in \mathcal{B} : u(\ell^*(B)) \in \tau} a_B\chi_{\ell^*(B)}(p)\right)^q. \end{equation}
If $p$ is not narrow then it is called ``broad''. If $p$ is broad, then
\begin{multline} \label{triplestar} \sum_{B \in \mathcal{B}} a_B\chi_{\ell^*(B)}(p) \lesssim \rho^{-4/3} \times \Bigg( \sum_{B_1 \in \mathcal{B}} \sum_{B_2 \in \mathcal{B}} \sum_{B_3 \in \mathcal{B}} a_{B_1}a_{B_2}a_{B_3}\\
\chi_{\ell^*(B_1) }(p)\chi_{\ell^*(B_2) }(p)\chi_{\ell^*(B_3)}(p) \left\lvert u(\ell^*(B_1)) \wedge u(\ell^*(B_2)) \wedge  u(\ell^*(B_3)) \right\rvert \Bigg)^{1/3}. \end{multline}
This can be shown as follows. Write 
\begin{multline} \label{pause1171}  \sum_{B \in \mathcal{B}} a_B\chi_{\ell^*(B)}(p) = \\
\Bigg( \sum_{B_1 \in \mathcal{B}} \sum_{B_2 \in \mathcal{B}} \sum_{B_3 \in \mathcal{B}} a_{B_1}a_{B_2}a_{B_3} \chi_{\ell^*(B_1) }(p)\chi_{\ell^*(B_2) }(p)\chi_{\ell^*(B_3)}(p)   \Bigg)^{1/3}. \end{multline}
Since $p$ is broad, for each $B_1,B_2 \in \mathcal{B}$, the main contribution in the sum over $B_3$ comes from those $B_3$ with $u(\ell^*(B_3))$ not contained in the $\rho^2$-neighbourhood of the span of $u(\ell^*(B_2))$, and these $B_3$ satisfy $\left\lvert u(\ell^*(B_2)) \wedge u(\ell^*(B_3)) \right\rvert \geq \widetilde{c} \rho^2$ for some absolute constant $\widetilde{c}$. Thus 
\begin{multline} \label{pause1172} \eqref{pause1171} \lesssim \Bigg( \sum_{B_1 \in \mathcal{B}} \sum_{B_2 \in \mathcal{B}} \sum_{B_3 \in \mathcal{B} : \left\lvert u(\ell^*(B_2)) \wedge u(\ell^*(B_3)) \right\rvert \geq \widetilde{c} \rho^2}\\
 a_{B_1}a_{B_2}a_{B_3} \chi_{\ell^*(B_1) }(p)\chi_{\ell^*(B_2) }(p)\chi_{\ell^*(B_3)}(p)   \Bigg)^{1/3}, \end{multline}
The sum over $B_1$ can be interchanged with the other two sums. Since $p$ is broad, for each $B_2,B_3 \in \mathcal{B}$ with $\left\lvert u(\ell^*(B_2)) \wedge u(\ell^*(B_3)) \right\rvert \geq \widetilde{c} \rho^2$, the main contribution in the sum over $B_1$ comes from those $B_1$ with $u(\ell^*(B_1))$ not contained in the $\rho^2$-neighbourhood of the plane spanned by $u(\ell^*(B_2))$ and $u(\ell^*(B_3))$, and these $B_1$ satisfy $\left\lvert u(\ell^*(B_1)) \wedge u(\ell^*(B_2)) \wedge u(\ell^*(B_3)) \right\rvert \gtrsim \rho^4$. Thus 
\begin{multline*} \eqref{pause1172} \lesssim \rho^{-4/3} \times \Bigg( \sum_{B_1 \in \mathcal{B}} \sum_{B_2 \in \mathcal{B}} \sum_{B_3 \in \mathcal{B}} a_{B_1}a_{B_2}a_{B_3}\\
\chi_{\ell^*(B_1) }(p)\chi_{\ell^*(B_2) }(p)\chi_{\ell^*(B_3)}(p) \left\lvert u(\ell^*(B_1)) \wedge u(\ell^*(B_2)) \wedge  u(\ell^*(B_3)) \right\rvert \Bigg)^{1/3}. \end{multline*}
This verifies \eqref{triplestar}. 

Since the broad and narrow points partition $B_E(0,2)$,
\begin{multline} \label{broadnarrow2} \left\lVert \sum_{B \in \mathcal{B}} a_B\chi_{\ell^*(B) } \right\rVert_{L^q(B_E(0,2) \cap \ell^{-1}(\mathcal{L}_{\angle}))} \\ \leq
 \left\lVert \chi_{\text{broad}} \sum_{B \in \mathcal{B}} a_B\chi_{\ell^*(B) } \right\rVert_{L^q(B_E(0,2))} \\
+ \left\lVert \chi_{\text{narrow}} \sum_{B \in \mathcal{B}} a_B\chi_{\ell^*(B) } \right\rVert_{L^q(B_E(0,2) \cap \ell^{-1}(\mathcal{L}_{\angle}))}. \end{multline}

\begin{sloppypar} Suppose first that the broad part dominates in \eqref{broadnarrow2}.  For each $B \in \mathcal{B}$, by Lemma~\ref{broadplank}, the set $\ell^*(B) \cap B_E(0,2)$ is contained in a plank of dimensions $\sim 1 \times \delta \times \delta^2$, with long direction parallel to $u(\ell^*(B))$. Let $T_1(B), \dotsc, T_M(B)$ be $\sim 1 \times \delta^2 \times \delta^2$ tubes parallel to $u(\ell^*(B))$, which form a boundedly overlapping cover of this plank, so that $M \sim \delta^{-1}$ with $M$ independent of $B$. Then by \eqref{triplestar} and the trilinear Kakeya inequality (Theorem~\ref{kakeya}),
\begin{align*} &\left\lVert \sum_{B \in \mathcal{B}} a_B\chi_{\ell^*(B) } \right\rVert_{L^q(B_E(0,2)  \cap \ell^{-1}(\mathcal{L}_{\angle})) )}^q  \\
&\quad\lesssim \rho^{-4q/3}\int_{B_E(0,2)} \Bigg( \sum_{B_1,B_2, B_3 \in \mathcal{B}}  a_{B_1}a_{B_2}a_{B_3}\chi_{\ell^*(B_1)} \chi_{\ell^*(B_2)} \chi_{\ell^*(B_3)}\times \\
&\qquad \left\lvert u(\ell^*(B_1)) \wedge u(\ell^*(B_2)) \wedge u(\ell^*(B_3)) \right\rvert \Bigg)^{q/3} \\
&\quad\lesssim \rho^{-4q/3}\int_{\mathbb{R}^3} \Bigg( \sum_{B_1,B_2, B_3 \in \mathcal{B}} \sum_{1 \leq m_1, m_2, m_3 \leq M} a_{B_1}a_{B_2}a_{B_3} \times\\
&\qquad \chi_{T_{m_1}(B_1)}\chi_{T_{m_2}(B_2)}\chi_{{T_{m_3}(B_3)}}  \left\lvert u(\ell^*(B_1)) \wedge u(\ell^*(B_2)) \wedge u(\ell^*(B_3)) \right\rvert \Bigg)^{q/3} \\
&\quad \lesssim \rho^{-4q/3} \delta^{6} M^{3/2} \\
&\quad \lesssim  \rho^{-4q/3} \delta^{3q},  \end{align*}
since $q=3/2$. Since $c_t(\nu) \gtrsim \nu(\mathbb{H}) =1$, substituting this bound into \eqref{quadriplestar} proves \eqref{claimed2} when the broad part dominates, provided $C_t \gg \rho^{-4q/3}$ (where $\rho$ is yet to be chosen). This covers the case where the broad part dominates in \eqref{broadnarrow2}. \end{sloppypar}

If the narrow part dominates in \eqref{broadnarrow2}, then by \eqref{startwo},
\begin{multline*} \left\lVert \sum_{B \in \mathcal{B}} a_B\chi_{\ell^*(B) } \right\rVert_{L^q(B_E(0,2)  \cap \ell^{-1}(\mathcal{L}_{\angle}))}^q \\
\lesssim \sum_{\tau} \left\lVert \sum_{B \in \mathcal{B} : u(\ell^*(B)) \in \tau } a_B\chi_{\ell^*(B) } \right\rVert_{L^q(B_E(0,2) \cap \ell^{-1}(\mathcal{L}_{\angle}))}^q. \end{multline*}
For each $\tau$, let $\mathbb{T}_{\tau}$ be a boundedly overlapping cover of $\mathbb{R}^3$ by planks of dimensions $\sim \rho \times \rho^2 \times 1$ parallel to $\tau$. By Lemma~\ref{broadplank}, for each $B \in \mathcal{B}$ with $u(\ell^*(B)) \in \tau$, there exists $T \in \mathbb{T}_{\tau}$ such that $\ell^*(B) \cap B_E(0,2) \subseteq T$ (provided the implicit constant in the definition of the $T$'s is now chosen large enough). Hence
\begin{multline*} \sum_{\tau} \left\lVert \sum_{B \in \mathcal{B} : u(\ell^*(B)) \in \tau } a_B\chi_{\ell^*(B) } \right\rVert_{L^q(B_E(0,2) \cap \ell^{-1}(\mathcal{L}_{\angle}))}^q \\ \lesssim \sum_{\tau} \sum_{T \in \mathbb{T}_{\tau} } \left\lVert \sum_{\substack{B \in \mathcal{B} : u(\ell^*(B)) \in \tau\\
\ell^*(B) \cap B_E(0,2) \subseteq T}} a_B\chi_{\ell^*(B) } \right\rVert_{L^q(B_E(0,2) \cap \ell^{-1}(\mathcal{L}_{\angle}))}^q. \end{multline*}
The point-line duality step at Eq.~\eqref{reversible2} is reversible provided that the radii of the Korányi balls are doubled. More precisely, if $B \in \mathcal{B}$ is such that $B \cap \ell \neq \emptyset$ for some horizontal line $\ell$, then $\mathcal{H}^1_{\mathbb{H}}( 2B \cap \ell) \sim \delta$. This follows by left-translating to the origin and using $\mathcal{H}^1_{\mathbb{H}}\left( B_{\mathbb{H}}(0, \delta) \cap \mathbb{V}_{\theta}\right) \sim \delta$ for any $\theta \in [0, \pi)$; since $d_{\mathbb{H}}$ is equal to the Euclidean metric on any horizontal subgroup $\mathbb{V}_{\theta}$. This gives
\[ \int_{\pi/4}^{3\pi/4} \left\lVert P_{\mathbb{V}_{\theta}^{\perp}\#} \nu \right\rVert_{L^q\left(\mathcal{H}^3_{\mathbb{H}}\right)}^q \, d\theta \lesssim \sum_{\tau} \sum_{T \in \mathbb{T}_{\tau} }\int_{\pi/4}^{3\pi/4} \left\lVert P_{\mathbb{V}_{\theta}^{\perp}\#} \nu_T \right\rVert_{L^q\left(\mathcal{H}^3_{\mathbb{H}}\right)}^q \, d\theta, \]
where 
\[ \nu_T = \frac{1}{\delta^4}\sum_{\substack{B \in \mathcal{B} : u(\ell^*(B)) \in \tau\\
\ell^*(B) \cap B_E(0,2) \subseteq T}}  a_B\chi_{2B}. \]
By Lemma~\ref{localisation}, each measure $\nu_T$ is supported in a Korányi ball of radius $\sim \rho$. Let $(z_T,t_T)$ be the centre of this ball.  By Lemma~\ref{translation}, 
\[ \int_{\pi/4}^{3\pi/4} \left\lVert P_{\mathbb{V}_{\theta}^{\perp}\#} \nu_T \right\rVert_{L^q\left(\mathcal{H}^3_{\mathbb{H}}\right)}^q \, d\theta \leq \int_{0}^{\pi} \left\lVert P_{\mathbb{V}_{\theta}^{\perp}\#} L_{(z_T,t_T)^{-1}\#}\nu_T \right\rVert_{L^q\left(\mathcal{H}^3_{\mathbb{H}}\right)}^q \, d\theta. \]
For each $\lambda>0$, let $D_{\lambda} : \mathbb{H} \to \mathbb{H}$ be the dilation $(z,t) \mapsto (\lambda z, \lambda^2 t)$. Then
\begin{equation} \label{starsss} P_{\mathbb{V}_{\theta}^{\perp}} = D_{\rho}P_{\mathbb{V}_{\theta}^{\perp}}D_{\rho^{-1}}, \end{equation}
and therefore 
\[  P_{\mathbb{V}_{\theta}^{\perp}\#} L_{(z_T,t_T)^{-1}\#}\nu_T = D_{\rho\#}P_{\mathbb{V}_{\theta}^{\perp}\#}D_{\rho^{-1}\#}L_{(z_T,t_T)^{-1}\#}\nu_T. \]
This gives 
\begin{multline} \label{pause} \int_{0}^{\pi} \left\lVert P_{\mathbb{V}_{\theta}^{\perp}\#} L_{(z_T,t_T)^{-1}\#}\nu_T \right\rVert_{L^q\left(\mathcal{H}^3_{\mathbb{H}}\right)}^q \, d\theta \\
= \int_{0}^{\pi} \left\lVert D_{\rho\#} P_{\mathbb{V}_{\theta}^{\perp}\#}  D_{\rho^{-1}\#}L_{(z_T,t_T)^{-1}\#}\nu_T \right\rVert_{L^q\left(\mathcal{H}^3_{\mathbb{H}}\right)}^q \, d\theta. \end{multline}
For any $\theta \in [0, \pi)$, given a non-negative Borel function $f$ supported in $\mathbb{V}_{\theta}^{\perp}$ which is integrable with respect to $\mathcal{H}^3_{\mathbb{H}}$, if $\mu_f$ is the measure supported in $\mathbb{V}_{\theta}^{\perp}$ such that the Radon-Nikodym derivative of $\mu_f$ with respect to $\mathcal{H}^3_{\mathbb{H}}$ is equal to $f$, then the Radon-Nikodym derivative of $D_{\rho\#} \mu_f$ with respect to $\mathcal{H}^3_{\mathbb{H}}$ is equal to $\rho^{-3} \left(f \circ D_{\rho^{-1}} \right)$. By a change of variables, it follows that 
\begin{equation} \label{fivestars} \eqref{pause} = \rho^{-3(q-1)} \int_{0}^{\pi} \left\lVert P_{\mathbb{V}_{\theta}^{\perp}\#}  D_{\rho^{-1}\#}L_{(z_T,t_T)^{-1}\#}\nu_T \right\rVert_{L^q\left(\mathcal{H}^3_{\mathbb{H}}\right)}^q \, d\theta. \end{equation}
Moreover, 
\[ c_t\left( D_{\rho^{-1}\#}L_{(z_T,t_T)^{-1}\#}\nu_T\right) = \rho^t c_t(\nu_T) \lesssim \rho^t c_t(\nu). \]
Hence, by applying the induction hypothesis\footnote{By the triangle inequality and by the invariance of $L^q$ norms of projections under left translations of the measure, the induction hypothesis for measures supported in a Korányi ball of radius $\sim 1$ around the origin follows from the induction hypothesis for measures supported in $B_{\mathbb{H}}(0,c)$, with a worse constant.} at scale $\delta/\rho$, 
\begin{align} \notag &\sum_{\tau} \sum_{T \in \mathbb{T}_{\tau} }\int_{\pi/4}^{3\pi/4} \left\lVert P_{\mathbb{V}_{\theta}^{\perp}\#} \nu_T \right\rVert_{L^q\left(\mathcal{H}^3_{\mathbb{H}}\right)}^q \, d\theta \\
\label{hypothesis} &\quad \lesssim  \sum_{\tau} \sum_{T \in \mathbb{T}_{\tau} }  C_t\rho^{(t-3)(q-1)} c_t(\nu)^{q-1} \nu_T(\mathbb{H})\\
\label{trivial} &\quad \leq  C_t\rho^{(t-3)(q-1)} c_t(\nu)^{q-1}  \sum_{\tau} \sum_{T \in \mathbb{T}_{\tau} }\sum_{\substack{B \in \mathcal{B} : \\
 u(\ell^*(B)) \in \tau, \\
\ell^*(B) \cap B_E(0,2) \subseteq T }} \nu(B) \\
\label{rhopower} &\quad \lesssim C_t\rho^{(t-3)(q-1)} \nu(\mathbb{H}) c_t(\nu)^{q-1}. \end{align}
The inequality
\[ \nu_T(\mathbb{H}) \lesssim \sum_{\substack{B \in \mathcal{B} : \\
 u(\ell^*(B)) \in \tau, \\
\ell^*(B) \cap B_E(0,2) \subseteq T }} \nu(B),\]
used to get from \eqref{hypothesis} to \eqref{trivial}, follows from the definitions of $\nu_T$ and $\nu$. A justification will be given for the inequality
\begin{equation} \label{pause116} \sum_{\tau} \sum_{T \in \mathbb{T}_{\tau} }\sum_{\substack{B \in \mathcal{B} : \\
 u(\ell^*(B)) \in \tau, \\
\ell^*(B) \cap B_E(0,2) \subseteq T }} \nu(B) \lesssim \nu(\mathbb{H}), \end{equation}
used to get from \eqref{trivial} to \eqref{rhopower}. By Fubini, \eqref{pause116} is equivalent to
\begin{equation} \label{pause117} \sum_{B \in \mathcal{B} } \sum_{\substack{\tau: \\  u(\ell^*(B)) \in \tau}} \sum_{\substack{T \in \mathbb{T}_{\tau}:\\ \ell^*(B) \cap B_E(0,2) \subseteq T }} \nu(B) \lesssim \nu(\mathbb{H}). \end{equation}
For each $B \in \mathcal{B}$, there are $\lesssim 1$ pairs $(\tau, T)$ with $T \in \mathbb{T}_{\tau}$ such that $u(\ell^*(B)) \in \tau$ and $\ell^*(B) \cap B_E(0,2) \subseteq T$. Hence 
\[ \sum_{B \in \mathcal{B} } \sum_{\substack{\tau: \\  u(\ell^*(B)) \in \tau}} \sum_{\substack{T \in \mathbb{T}_{\tau}:\\ \ell^*(B) \cap B_E(0,2) \subseteq T }} \nu(B) \lesssim \sum_{B \in \mathcal{B} }  \nu(B). \]
The balls $B \in \mathcal{B}$ are disjoint from each other, so this verifies \eqref{pause117} and hence \eqref{pause116}.

The power of $\rho$ in \eqref{rhopower} is positive since $t>3$ and $q>1$, so the induction closes provided $\rho$ is sufficiently small (independent of $\delta$). This proves \eqref{claimed2}. 

As explained at the beginning of the proof, the inequality \eqref{claimed2} implies that \eqref{claimed7} holds for any $\delta \in (0,1)$. By Fatou's lemma, it follows that 
\[ \liminf_{\delta \to 0^+} \left\lVert P_{\mathbb{V}_{\theta}^{\perp}\#}\left(\mu \ast_{\mathbb{H}} \eta_{\delta}\right) \right\rVert_{L^{3/2}\left(\mathcal{H}^3_{\mathbb{H}}\right)} < \infty, \]
for a.e.~$\theta \in [0, \pi)$. By duality of $L^{3/2}$ and $L^3$, using that the compactly supported continuous functions are dense in $L^3$, and using that $\mu \ast_{\mathbb{H}} \eta_{\delta}$ converges to $\mu$ weak* as $\delta \to 0^+$, it follows that $P_{\mathbb{V}_{\theta}^{\perp}\#}\mu \in L^{3/2}\left(\mathcal{H}^3_{\mathbb{H}}\right)$ for a.e.~$\theta \in [0,\pi)$, and that 
\[ \left\lVert P_{\mathbb{V}_{\theta}^{\perp}\#}\mu \right\rVert_{L^{3/2}\left(\mathcal{H}^3_{\mathbb{H}}\right)} \leq \liminf_{\delta \to 0^+} \left\lVert P_{\mathbb{V}_{\theta}^{\perp}\#}\left(\mu \ast_{\mathbb{H}} \eta_{\delta}\right)\right\rVert_{L^{3/2}\left(\mathcal{H}^3_{\mathbb{H}}\right)}, \]
for a.e.~$\theta \in [0, \pi)$. By Fatou's lemma again,
\[ \int_0^{\pi} \left\lVert P_{\mathbb{V}_{\theta}^{\perp}\#}\mu \right\rVert^{3/2}_{L^{3/2}\left(\mathcal{H}^3_{\mathbb{H}}\right)} \, d\theta \lesssim \mu(\mathbb{H}) c_t(\mu)^{q-1}.  \]
This finishes the proof.
\end{proof}

\begin{theorem} \label{exceptionalset} If $s>3$ and $A \subseteq \mathbb{H}$ is $\mathcal{H}^s_{\mathbb{H}}$-measurable with $0 < \mathcal{H}^s_{\mathbb{H}}(A) < \infty$, then there is a set $E \subseteq [0, \pi)$ of measure zero, such that for any $\mathcal{H}^s_{\mathbb{H}}$-measurable set $B \subseteq A$ with $\mathcal{H}^s_{\mathbb{H}}(B)>0$, 
\[ \mathcal{H}^3_{\mathbb{H}}\left( P_{\mathbb{V}_{\theta}^{\perp}}(B) \right) >0 \quad \text{for all $\theta \in [0, \pi) \setminus E$.} \]
The set $E$ can be taken to be 
\[ E = \left\{ \theta \in [0, \pi): P_{\mathbb{V}_{\theta}^{\perp}\#} \mu \not\ll \mathcal{H}^3_{\mathbb{H}} \right\}, \]
where $\mu$ is defined by $\mu(F) = \mathcal{H}^s_{\mathbb{H}}(A \cap F)$ for any Borel set $F$.  \end{theorem} 
\begin{proof} It may be assumed that $s \leq 4$. An upper density inequality for general metric spaces (see e.g.~\cite[Exercise 2.4.4]{ambrosiotilli}) gives that for $\mathcal{H}^s_{\mathbb{H}}$-a.e.~$x \in A$, 
\begin{equation} \label{claim17} \limsup_{r \to 0^+} \frac{ \mathcal{H}^s_{\mathbb{H}}(A \cap B_{\mathbb{H}}(x,r) )}{r^s} < 100. \end{equation}
For each $j \geq 1$, define inductively
\[ A_j = \left\{ x \in A \cap B_{\mathbb{H}}(0, j) \setminus \bigcup_{0 < k < j} A_k : \sup_{0 < r < 1/j} \frac{ \mathcal{H}^s_{\mathbb{H}}(A \cap B_{\mathbb{H}}(x,r) )}{r^s}  <  100 \right\}. \]
Then each $A_j$ is an $\mathcal{H}^s_{\mathbb{H}}$-measurable subset of $A$, and $\mathcal{H}^s_{\mathbb{H}}\left(A \setminus \bigcup_{j=1}^{\infty} A_j\right) = 0$ by \eqref{claim17}. Hence if $\mu_j$ is is defined by $\mu_j(F) = \mu(F \cap A_j)$ for any Borel set $F$, then $c_s(\mu_j) < \infty$ for each $j$, and $\mu = \sum_j \mu_j$. By Theorem~\ref{quantitativetheorem},  
\[ P_{\mathbb{V}_{\theta}^{\perp}\#}\mu_j \ll \mathcal{H}^3_{\mathbb{H}}, \]
for a.e.~$\theta \in [0, \pi)$. Since $\mu = \sum_j \mu_j$, it follows that
\[ P_{\mathbb{V}_{\theta}^{\perp}\#}\mu \ll \mathcal{H}^3_{\mathbb{H}}, \] 
for a.e.~$\theta \in [0, \pi)$.

Now let $B \subseteq A$ be any $\mathcal{H}^s_{\mathbb{H}}$-measurable set with $\mathcal{H}^s_{\mathbb{H}}(B) >0$. Define $\nu$ by
\[ \nu(F) = \mu(F \cap B) \quad (=\mathcal{H}^s_{\mathbb{H}}(F \cap B)), \]
for any Borel set $F$. Then 
\[ \left\{ \theta \in [0, \pi): P_{\mathbb{V}_{\theta}^{\perp}\#} \nu \not\ll \mathcal{H}^3_{\mathbb{H}} \right\} \subseteq E, \]
so the theorem will follow from
\begin{equation} \label{inclusion} \left\{ \theta \in [0, \pi): \mathcal{H}^3_{\mathbb{H}}\left( P_{\mathbb{V}_{\theta}^{\perp}}(B) \right) =0 \right\}  \subseteq \left\{ \theta \in [0, \pi): P_{\mathbb{V}_{\theta}^{\perp}\#} \nu \not\ll \mathcal{H}^3_{\mathbb{H}} \right\}. \end{equation}
To see this, let $\theta \in [0, \pi)$ be such that $\mathcal{H}^3_{\mathbb{H}}\left( P_{\mathbb{V}_{\theta}^{\perp}}(B) \right) =0$. Let $F$ be a Borel set containing $P_{\mathbb{V}_{\theta}^{\perp}}(B)$ with $\mathcal{H}^3_{\mathbb{H}}(F) =0$. Then 
\[ 0 < \mathcal{H}^s_{\mathbb{H}}(B)  = \mu\left( \left(P_{\mathbb{V}_{\theta}^{\perp}}\right)^{-1}(F) \cap B \right) = \left(P_{\mathbb{V}_{\theta}^{\perp}\#} \nu\right)(F). \]
This shows that $P_{\mathbb{V}_{\theta}^{\perp}\#} \nu \not \ll \mathcal{H}^3_{\mathbb{H}}$ and verifies \eqref{inclusion}. \end{proof} 

\section{Intersections} \label{intersections}
The restriction of a measure $\mu$ to a measurable set $A$ is defined by $(\mu \restriction_A )(F) = \mu(F\cap A)$ for measurable sets $F$. The proof of the lemma below does not differ substantially from the proof of Lemma~3.2 from \cite{mattila2} (see also \cite[Lemma~16]{marstrand}), but the proof is included for completeness.
\begin{lemma} \label{analysis} Let $E \subseteq \mathbb{H}$ be a Borel set, $t>0$ and $\theta \in [0, \pi)$. If $\mathcal{H}^t_{\mathbb{H}}\left( E \cap P_{\mathbb{V}_{\theta}^{\perp}}^{-1}(u) \right) = 0$ for all $u \in \mathbb{V}_{\theta}^{\perp}$, then for any finite Borel measure $\mu$ on $\mathbb{H}$, 
\[ \limsup_{r \to 0^+} \liminf_{\delta \to 0^+} r^{-t} \delta^{-3} \mu\left\{ y \in B_{\mathbb{H}}(x,r) : d_{\mathbb{H}}\left(P_{\mathbb{V}_{\theta}^{\perp}}(x),P_{\mathbb{V}_{\theta}^{\perp}}(y) \right) < \delta \right\} = +\infty, \]
for $\mu$-a.e.~$x \in E$.   \end{lemma} 
\begin{proof} 
Since finite Borel measures are inner regular, and since
\begin{align*} &E \setminus \bigg\{ x \in E : \limsup_{r \to 0^+} \liminf_{\delta \to 0^+} \\
&\qquad r^{-t} \delta^{-3} \mu\left\{ y \in B_{\mathbb{H}}(x,r) : d_{\mathbb{H}}\left(P_{\mathbb{V}_{\theta}^{\perp}}(x),P_{\mathbb{V}_{\theta}^{\perp}}(y) \right) < \delta \right\} = +\infty \bigg\} \\
&\quad = \bigcup_{N =1}^{\infty}
\bigg\{ x \in E : \sup_{0 < r < 1/N} \liminf_{\delta \to 0^+} \\
&\qquad r^{-t} \delta^{-3} \mu\left\{ y \in B_{\mathbb{H}}(x,r) : d_{\mathbb{H}}\left(P_{\mathbb{V}_{\theta}^{\perp}}(x),P_{\mathbb{V}_{\theta}^{\perp}}(y) \right) < \delta \right\} \leq N \bigg\}, \end{align*}
it suffices to prove that, for any $N \geq 1$, $\mu(F) = 0$ for any nonempty compact set $F$ with
\begin{multline*} F \subseteq \Bigg\{ x \in E : \sup_{0< r < 1/N} \liminf_{\delta \to 0^+} \\
r^{-t} \delta^{-3} \mu\left\{ y \in B_{\mathbb{H}}(x,r) : d_{\mathbb{H}}\left(P_{\mathbb{V}_{\theta}^{\perp}}(x),P_{\mathbb{V}_{\theta}^{\perp}}(y) \right) < \delta \Bigg\} \leq N \right\}. \end{multline*}

 Fix such a set $F$. By partitioning $F$ into sets of smaller diameter, it may be assumed that $F$ is contained in a Korányi ball of radius $1/(100N)$. It will first be shown that $P_{\mathbb{V}_{\theta}^{\perp}\#}\left( \mu \restriction_F \right) \ll \mathcal{H}^3_{\mathbb{H}}$. Given 
\[ u \in \supp  P_{\mathbb{V}_{\theta \#}^{\perp}} \left( \mu \restriction_F \right) = P_{\mathbb{V}_{\theta}^{\perp}}\left( \supp\left( \mu \restriction_F \right) \right), \]
let $x_0 \in F$ be such that $P_{\mathbb{V}_{\theta}^{\perp}}(x_0) = u$. Then by the definition of $F$, 
\begin{align*} &\liminf_{\delta \to 0^+}  \frac{P_{\mathbb{V}_{\theta \#}^{\perp}}\left( \mu\restriction_{F} \right)\left(B_{\mathbb{H}}(u, \delta) \right)}{\delta^3} \\
&\quad = \liminf_{\delta \to 0^+}  \frac{P_{\mathbb{V}_{\theta \#}^{\perp}}\left( \mu\restriction_{F} \right)\left(B_{\mathbb{H}}\left(P_{\mathbb{V}_{\theta}^{\perp}}(x_0), \delta\right) \right)}{\delta^3} \\
&\quad\leq \liminf_{\delta \to 0^+}  \frac{\mu\left\{ x \in B_{\mathbb{H}}(x_0,1/(2N)) : d_{\mathbb{H}}\left(P_{\mathbb{V}_{\theta}^{\perp}}(x),P_{\mathbb{V}_{\theta}^{\perp}}(x_0) \right) < \delta \right\}}{\delta^3} \\
&\quad \lesssim N. \end{align*}
By a straightforward argument using the Vitali covering lemma and that the $\mathcal{H}_{\mathbb{H}}^3$-measure of any Korányi $\delta$-ball in $\mathbb{V}_{\theta}^{\perp}$ is $\sim \delta^3$, it follows that 
\[ P_{\mathbb{V}_{\theta \#}^{\perp}} \left( \mu \restriction_{F} \right) \ll \mathcal{H}_{\mathbb{H}}^3. \]

\begin{sloppypar} Hence, for any $0 < r < 1/(2N)$ and $x_0 \in F$, by the (generalised) Lebesgue differentiation theorem \cite[p.~13]{stein}, the Radon-Nikodym derivative of $P_{\mathbb{V}_{\theta \#}^{\perp}} \left( \mu \restriction_{F \cap B_{\mathbb{H}}(x_0,r)} \right)$ with respect to $\mathcal{H}_{\mathbb{H}}^3$ satisfies
\[ P_{\mathbb{V}_{\theta \#}^{\perp}} \left( \mu \restriction_{F \cap B_{\mathbb{H}}(x_0,r)} \right)(u) = c\lim_{ \delta \to 0^+ } \frac{P_{\mathbb{V}_{\theta \#}^{\perp}}\left( \mu\restriction_{F \cap B_{\mathbb{H}}(x_0,r)} \right)\left(B_{\mathbb{H}}(u, \delta) \right)}{\delta^3}, \]
for $\mathcal{H}^3_{\mathbb{H}}$-a.e.~$u \in \mathbb{V}_{\theta}^{\perp}$, where $c>0$ is an absolute constant. If $u \notin P_{\mathbb{V}_{\theta}^{\perp}}\left(F \cap \overline{B_{\mathbb{H}}(x_0,r)}\right)$, then the above limit is zero. Otherwise, there exists $x' \in F \cap \overline{B_{\mathbb{H}}(x_0,r)}$ such that $P_{\mathbb{V}_{\theta}^{\perp}}(x') = u$. Hence, by the definition of $F$, using that $0 < 2r < 1/N$ and $x' \in F$,
\begin{multline*} \lim_{ \delta \to 0^+ } \frac{P_{\mathbb{V}_{\theta \#}^{\perp}}\left( \mu\restriction_{F \cap B_{\mathbb{H}}(x_0,r)} \right)\left(B_{\mathbb{H}}(u, \delta) \right)}{\delta^3} \\
\leq \liminf_{ \delta \to 0^+ } \frac{\mu\left\{ x \in B_{\mathbb{H}}(x',2r) : d_{\mathbb{H}}\left(P_{\mathbb{V}_{\theta}^{\perp}}(x),P_{\mathbb{V}_{\theta}^{\perp}}(x') \right) < \delta \right\}}{\delta^3} \lesssim N r^t, \end{multline*}
for $\mathcal{H}^3_{\mathbb{H}}$-a.e.~$u \in \mathbb{V}_{\theta}^{\perp}$. Therefore, for any $\delta>0$, $x_0 \in F$ and $0 < r < 1/(2N)$,
\begin{multline} \label{fractal} P_{\mathbb{V}_{\theta \#}^{\perp}} \left( \mu \restriction_{F \cap B_{\mathbb{H}}(x_0,r) } \right)\left(B_{\mathbb{H}}\left(P_{\mathbb{V}_{\theta}^{\perp}}(x_0), \delta\right) \right) \\
 = \int_{B_{\mathbb{H}}\left(P_{\mathbb{V}_{\theta}^{\perp}}(x_0), \delta\right)} P_{\mathbb{V}_{\theta \#}^{\perp}} \left( \mu \restriction_{F \cap B_{\mathbb{H}}(x_0,r)} \right)(u) \, d\mathcal{H}^3_{\mathbb{H}}(u) \lesssim N r^t\delta^3. \end{multline}
This establishes the key inequality to be used below.\end{sloppypar}

Since 
\[ \mu(F) = P_{\mathbb{V}_{\theta \#}^{\perp}} \left( \mu \restriction_F \right)\left(\mathbb{V}_{\theta}^{\perp}\right) = \int_{\mathbb{V}_{\theta}^{\perp}} P_{\mathbb{V}_{\theta \#}^{\perp}} \left( \mu \restriction_F \right)(u) \, d\mathcal{H}^3_{\mathbb{H}}(u), \]
there must exist $u \in P_{\mathbb{V}_{\theta}^{\perp}}\left(F \cap \supp \mu\right)$ such that 
\[ P_{\mathbb{V}_{\theta \#}^{\perp}} \left( \mu \restriction_F \right)(u) \sim \lim_{ \delta \to 0^+ } \frac{P_{\mathbb{V}_{\theta \#}^{\perp}}\left( \mu\restriction_{F} \right)\left(B_{\mathbb{H}}(u, \delta) \right)}{\delta^3} \gtrsim C_F^{-1} \mu(F), \]
 where $C_F := \mathcal{H}^3_{\mathbb{H}}\left(P_{\mathbb{V}_{\theta}^{\perp}}(F)\right)$ (which is finite, and may be assumed nonzero since this would already imply that $\mu(F) = 0$). Hence there exists a $\delta_u>0$ such that for all $0 < \delta < \delta_u$, 
\[ \mu\left( F \cap P_{\mathbb{V}_{\theta}^{\perp}}^{-1}\left(B_{\mathbb{H}}(u,\delta) \right) \right) \gtrsim \delta^3 C_F^{-1} \mu(F). \]
But $\mathcal{H}^t_{\mathbb{H}}\left( F \cap P _{\mathbb{V}_{\theta}^{\perp}}^{-1}(u) \right) =0$ by assumption, and $F \cap P_{\mathbb{V}_{\theta}^{\perp}}^{-1}(u)$ is compact, so for any $\epsilon >0$ there exists a finite covering $\{B_{\mathbb{H}}(y_k, r_k)\}_k$ of $F \cap P_{\mathbb{V}_{\theta}^{\perp}}^{-1}(u)$ by Korányi balls, with centres $y_k \in F \cap P_{\mathbb{V}_{\theta}^{\perp}}^{-1}(u)$, such that 
\[ \sum_k r_k^t < \epsilon. \]
Hence, for sufficiently small $\epsilon >0$, and sufficiently small $\delta>0$,
\begin{align*} \mu(F) &\lesssim  C_F\delta^{-3}  \mu\left( F \cap P_{\mathbb{V}_{\theta}^{\perp}}^{-1}\left(B_{\mathbb{H}}(u,\delta) \right) \right) \\
&\leq C_F\sum_k \delta^{-3}\mu\left( F \cap B_{\mathbb{H}}(y_k, r_k) \cap P_{\mathbb{V}_{\theta}^{\perp}}^{-1}\left(B_{\mathbb{H}}\left(P_{\mathbb{V}_{\theta}^{\perp} }(y_k),\delta\right) \right) \right) \\
&\lesssim C_F N\sum_k r_k^t &&\text{(by \eqref{fractal})}\\
&< C_F N\epsilon. \end{align*}
Letting $\epsilon \to 0$ gives $\mu(F) = 0$, which proves the lemma. \end{proof} 
 
\begin{theorem} Let $s>3$. If $A \subseteq \mathbb{H}$ is $\mathcal{H}^s_{\mathbb{H}}$-measurable with $0 < \mathcal{H}^s_{\mathbb{H}}(A) < \infty$, then for $\left(\mathcal{H}^s_{\mathbb{H}} \times \mathcal{H}^1_E\right)$-a.e.~$(x, \theta) \in A \times [0, \pi)$, 
\begin{equation} \label{slicingeq} \dim\left( A \cap P_{\mathbb{V}_{\theta}^{\perp}}^{-1}\left(P_{\mathbb{V}_{\theta}^{\perp}}(x) \right)\right) = s-3, \end{equation}
and for a.e.~$\theta \in [0,\pi)$,
\begin{equation} \label{measureintersect} \mathcal{H}^3_{\mathbb{H}}\left\{ w \in \mathbb{V}_{\theta}^{\perp} : \dim\left( A \cap P_{\mathbb{V}_{\theta}^{\perp}}^{-1}(w) \right) = s-3 \right\} >0. \end{equation} \end{theorem} \begin{sloppypar}
\begin{proof} For every $\theta \in [0, \pi)$, the inequality 
\begin{equation} \label{upperbound} \dim\left( A \cap P_{\mathbb{V}_{\theta}^{\perp}}^{-1}(w) \right) \leq s-3, \end{equation}
holds for $\mathcal{H}^3_{\mathbb{H}}$-a.e.~$w \in \mathbb{V}_{\theta}^{\perp}$, as a consequence of the following inequality\footnote{This inequality shows that both conclusions of the theorem can be refined to say that the intersections have dimension $s-3$ and finite $\mathcal{H}^{s-3}_{\mathbb{H}}$-measure. This refinement is a Heisenberg group version of Theorem~III from \cite{marstrand}.} from \cite[Lemma~5.3]{chousionisfasslerorponen}:
\[ \int^{*} \mathcal{H}^{s-3}_{\mathbb{H}}\left( A \cap P_{\mathbb{V}_{\theta}^{\perp}}^{-1}(w) \right) \, d\mathcal{H}^3_{\mathbb{H}}(w) \lesssim \mathcal{H}^s_{\mathbb{H}}(A). \]
Recall that the upper integral $\int^* f \, d\mu$ of $f: X \to [0, +\infty]$ on a measure space $(X, \mu)$ is the infimum of $\int \phi \, d\mu$ over the measurable functions $\phi : X \to [0, +\infty]$ with $f \leq \phi$ (see e.g.~\cite[p.~13]{mattila}). The finiteness of $\int^{*} \mathcal{H}^{s-3}_{\mathbb{H}}\left( A \cap P_{\mathbb{V}_{\theta}^{\perp}}^{-1}(w) \right) \, d\mathcal{H}^3_{\mathbb{H}}(w)$ implies that
\[ \mathcal{H}^{s-3}_{\mathbb{H}}\left( A \cap P_{\mathbb{V}_{\theta}^{\perp}}^{-1}(w) \right) < +\infty, \]
 for $\mathcal{H}^3_{\mathbb{H}}$-a.e.~$w \in \mathbb{V}_{\theta}^{\perp}$, and this implies \eqref{upperbound} for $\mathcal{H}^3_{\mathbb{H}}$-a.e.~$w \in \mathbb{V}_{\theta}^{\perp}$. This yields the upper bound in \eqref{measureintersect}, and by Theorem~\ref{exceptionalset} it also shows that for $\left(\mathcal{H}^s_{\mathbb{H}} \times \mathcal{H}^1_E\right)$-a.e.~$(x, \theta) \in A \times [0, \pi)$, 
\[ \dim\left( A \cap P_{\mathbb{V}_{\theta}^{\perp}}^{-1}\left(P_{\mathbb{V}_{\theta}^{\perp}}(x) \right)\right) \leq s-3. \]

It remains to prove the lower bounds. Using~\eqref{claim17} again, by letting $A_j$ be bounded $\mathcal{H}^s_{\mathbb{H}}$-measurable sets such that $c_s\left( \mathcal{H}^s_{\mathbb{H}}\restriction_{A_j}\right) < \infty$ for all $j$ and $\mathcal{H}^s_{\mathbb{H}}\left( A \setminus \bigcup_j A_j \right) = 0$, it may be assumed that the restriction $\mu$ of $\mathcal{H}^s_{\mathbb{H}}$ to $A$ satisfies $c_s(\mu) < \infty$, and by dilation it may be assumed that $A \subseteq B_{\mathbb{H}}(0,1)$. Using Theorem~1.9, Corollary~1.11 and Theorem~4.2 from \cite{mattila}, by replacing $A$ with a compact $A' \subseteq A$ such that $\mathcal{H}^s_{\mathbb{H}}(A \setminus A')$ is small, it may be assumed that $A$ is compact. Let $t \in (0,s-3)$. Let $\delta>0$ and let $r> \delta$. Let $\{B_j\}_j$ be a boundedly overlapping cover of $B_{\mathbb{H}}(0,1)$ by Korányi balls of radius $r$. Let $\mu_j$ be the restriction of $\mu$ to $2B_j$. Then 
\begin{multline*} \int_0^{\pi} \int \left( r^{-t} \delta^{-3} \mu\left\{ y \in B_{\mathbb{H}}(x,r) : d_{\mathbb{H}}\left( P_{\mathbb{V}_{\theta}^{\perp} }(y), P_{\mathbb{V}_{\theta}^{\perp} }(x) \right) < \delta \right\} \right)^{1/2} \, d\mu(x) \, d\theta  \\
\lesssim \sum_j \int_0^{\pi} \int  \left( r^{-t} \delta^{-3} \mu_j\left\{ y \in \mathbb{H} : d_{\mathbb{H}}\left( P_{\mathbb{V}_{\theta}^{\perp} }(y), P_{\mathbb{V}_{\theta}^{\perp} }(x) \right) < \delta \right\} \right)^{1/2} \, d\mu_j(x) \, d\theta.  \end{multline*}
For each $j$ and each $\theta \in [0, \pi)$,
\begin{multline*} \int \left( r^{-t} \delta^{-3} \mu_j\left\{ y \in \mathbb{H} : d_{\mathbb{H}}\left( P_{\mathbb{V}_{\theta}^{\perp} }(y), P_{\mathbb{V}_{\theta}^{\perp} }(x) \right) < \delta \right\}\right)^{1/2} \, d\mu_j(x)  \\
=  \int \left( r^{-t} \delta^{-3}\left( P_{\mathbb{V}_{\theta\#}^{\perp}}\mu_j\right)\left(B_{\mathbb{H}}(x, \delta)\right)\right)^{1/2} \, d\left(P_{\mathbb{V}_{\theta\#}^{\perp}}\mu_j \right)(x). \end{multline*}
Let $\eta = \chi_{B_{\mathbb{H}}(0,1) \cap \mathbb{V}_{\theta}^{\perp}}$, and let $\eta_{\delta}(z,t) =\delta^{-3} \eta(z/\delta, t/\delta^2)$. Then the above can be written as 
\[  r^{-t/2} \int \left[\left( \eta_{\delta} \ast_{\theta} P_{\mathbb{V}_{\theta\#}^{\perp}}\mu_j\right)(x)\right]^{1/2} \, d\left(P_{\mathbb{V}_{\theta\#}^{\perp}}\mu_j \right)(x), \]
where $\ast_{\theta}$ refers to convolution in $\mathbb{V}_{\theta}^{\perp}$ (which is commutative). By Hölder's inequality with respect to the measure $\mathcal{H}^3_{\mathbb{H}}$ on $\mathbb{V}_{\theta}^{\perp}$, this is 
\[ \leq r^{-t/2} \left\lVert \eta_{\delta} \ast_{\theta} P_{\mathbb{V}_{\theta\#}^{\perp}}\mu_j \right\rVert_{L^{3/2}\left(\mathcal{H}^3_{\mathbb{H}}\right) }^{1/2} \left\lVert P_{\mathbb{V}_{\theta\#}^{\perp}}\mu_j \right\rVert_{L^{3/2}\left(\mathcal{H}^3_{\mathbb{H}}\right)}. \]
By Young's convolution inequality, using $\left\lVert \eta_{\delta}\right\rVert_{L^{1}\left(\mathcal{H}^3_{\mathbb{H}}\right) } \sim 1$, this is
\[ \lesssim r^{-t/2} \left\lVert P_{\mathbb{V}_{\theta\#}^{\perp}}\mu_j \right\rVert_{L^{3/2}\left(\mathcal{H}^3_{\mathbb{H}}\right)}^{3/2}. \] 
Integrating over $\theta$ and summing over $j$ gives
\begin{multline} \label{pause37} \int_0^{\pi} \int \left( r^{-t} \delta^{-3} \mu\left\{ y \in B_{\mathbb{H}}(x,r) : d_{\mathbb{H}}\left( P_{\mathbb{V}_{\theta}^{\perp} }(y), P_{\mathbb{V}_{\theta}^{\perp} }(x) \right) < \delta \right\} \right)^{1/2} \, d\mu(x) \, d\theta \\
\lesssim r^{-t/2} \sum_j \int_0^{\pi} \left\lVert P_{\mathbb{V}_{\theta\#}^{\perp}} \mu_j \right\rVert_{L^{3/2}\left(\mathcal{H}^3_{\mathbb{H}}\right)}^{3/2} \, d\theta. \end{multline}
Let $x_j$ be the centre of $B_j$. Then by Lemma~\ref{translation}, 
\[ \eqref{pause37} = r^{-t/2} \sum_j \int_0^{\pi} \left\lVert P_{\mathbb{V}_{\theta\#}^{\perp}} L_{-x_j \#}\mu_j \right\rVert_{L^{3/2}\left(\mathcal{H}^3_{\mathbb{H}}\right)}^{3/2} \, d\theta. \]
By rescaling as in~\eqref{fivestars}, this equals
\[ r^{(-3-t)/2} \sum_j \int_0^{\pi} \left\lVert P_{\mathbb{V}_{\theta\#}^{\perp}} D_{r^{-1}\#}L_{-x_j \#}\mu_j \right\rVert_{L^{3/2}\left(\mathcal{H}^3_{\mathbb{H}}\right)}^{3/2} \, d\theta. \]
Using the inequality 
\[ c_s( D_{r^{-1}\#} L_{-x_j \#}\mu_j) \leq c_s( D_{r^{-1}\#} L_{-x_j \#}\mu)  = r^s c_s(\mu), \]
and then Theorem~\ref{quantitativetheorem}, gives
\[ \eqref{pause37} \lesssim r^{(s-3-t)/2} c_s(\mu)^{1/2} \sum_j \mu(2B_j) \lesssim  r^{(s-3-t)/2} c_s(\mu)^{1/2} \mu(\mathbb{H}). \]
Let $k_0$ be a large integer. Since $t < s-3$, summing over $r = 2^{-k}$, $k \geq k_0$, yields 
\begin{multline*} \sum_{k \geq k_0} \liminf_{\delta \to 0^+}\int_0^{\pi} \int \left( 2^{-kt} \delta^{-3} \mu\left\{ y \in B_{\mathbb{H}}\left(x,2^{-k}\right) : d_{\mathbb{H}}\left( P_{\mathbb{V}_{\theta}^{\perp} }(y), P_{\mathbb{V}_{\theta}^{\perp} }(x) \right) < \delta \right\} \right)^{1/2} \\ d\mu(x) \, d\theta 
 \lesssim 2^{-k_0(s-3-t)/2} c_s(\mu)^{1/2} \mu(\mathbb{H}). \end{multline*}
By the monotone convergence theorem and Fatou's lemma, this gives 
\begin{multline*} \int_0^{\pi} \int \limsup_{r \to 0^+} \liminf_{\delta \to 0^+} \\
\left( r^{-t} \delta^{-3} \mu\left\{ y \in B_{\mathbb{H}}(x,r) : d_{\mathbb{H}}\left( P_{\mathbb{V}_{\theta}^{\perp} }(y), P_{\mathbb{V}_{\theta}^{\perp} }(x) \right) < \delta \right\} \right)^{1/2} \, d\mu(x) \, d\theta =0. \end{multline*}
It follows that 
\begin{equation} \label{zeros} \lim_{r \to 0^+} \liminf_{\delta \to 0^+}  r^{-t} \delta^{-3} \mu\left\{ y \in B_{\mathbb{H}}(x,r) : d_{\mathbb{H}}\left( P_{\mathbb{V}_{\theta}^{\perp} }(y), P_{\mathbb{V}_{\theta}^{\perp} }(x) \right) < \delta \right\}  =0, \end{equation}
for $\left(\mu \times \mathcal{H}^1_E\right)$-a.e.~$(x, \theta) \in A \times [0, \pi)$.  For each $\theta \in [0, \pi)$, let 
\[ E_{\theta} = \left\{ x \in A : \mathcal{H}_{\mathbb{H}}^t\left( A \cap P_{\mathbb{V}_{\theta}^{\perp}}^{-1}\left(P_{\mathbb{V}_{\theta}^{\perp}}(x) \right) \right) = 0 \right\}. \]
Then each $E_{\theta}$ is a Borel set by Tonelli's theorem (see e.g.~\cite[Theorem~1.7.15]{tao}), and $\mathcal{H}^t_{\mathbb{H}}\left( E_{\theta} \cap P_{\mathbb{V}_{\theta}^{\perp}}^{-1}(u) \right) = 0$ for all $u \in \mathbb{V}_{\theta}^{\perp}$. Hence, by Lemma~\ref{analysis}, 
\begin{equation} \label{infinity} \limsup_{r \to 0^+} \liminf_{\delta \to 0^+} r^{-t} \delta^{-3} \mu\left\{ y \in B_{\mathbb{H}}(x,r) : d_{\mathbb{H}}\left(P_{\mathbb{V}_{\theta}^{\perp}}(x),P_{\mathbb{V}_{\theta}^{\perp}}(y) \right) < \delta \right\} = \infty, \end{equation}
for $\mu$-a.e.~$x \in E_{\theta}$. By comparing \eqref{zeros} with \eqref{infinity} and applying Fubini's theorem, it follows that $\mu(E_{\theta}) = 0$ for a.e.~$ \theta \in [0, \pi)$. By Fubini's theorem again, this gives 
\[ \left(\mu \times \mathcal{H}^1_E\right)\left\{ (x, \theta) \in A \times [0, \pi) : x \in E_{\theta} \right\} = 0, \]
and thus $\dim\left( A \cap P_{\mathbb{V}_{\theta}^{\perp}}^{-1}\left(P_{\mathbb{V}_{\theta}^{\perp}}(x) \right) \right) \geq t$ for $\left(\mu \times \mathcal{H}^1_E\right)$-a.e.~$(x,\theta) \in A \times [0, \pi)$. Letting $t \to s-3$ from below gives $\dim\left( A \cap P_{\mathbb{V}_{\theta}^{\perp}}^{-1}\left(P_{\mathbb{V}_{\theta}^{\perp}}(x) \right) \right) \geq s-3$ for $\left(\mu \times \mathcal{H}^1_E\right)$-a.e.~$(x,\theta) \in A \times [0, \pi)$. The inequality~\eqref{measureintersect} then follows from \eqref{slicingeq} and Theorem~\ref{exceptionalset}.    \end{proof} \end{sloppypar}

\end{document}